\providecommand\@dotsep{5}
\def\listtodoname{List of Todos}
\def\listoftodos{\@starttoc{tdo}\listtodoname}
\numberwithin{equation}{section}
\def\cal{\mathcal}
\newtheorem{theorem}{Theorem}[section]
\newtheorem{lemma}[theorem]{Lemma}
\newcommand\R{\mathbb R}
\newcommand\N{\mathbb N}
\begin{document}

\title[ Ground state  for fractional  equations with critical nonlinearity]
{Ground state solutions for fractional scalar field equations under a general critical nonlinearity}

\author{Claudianor O. Alves}
\author{Giovany M. Figueiredo}
\author{Gaetano Siciliano}

\address[Claudianor O. Alves]{\newline\indent Departamento de Matem\'atica
\newline\indent 
Universidade Federal de Campina Grande,
\newline\indent
58429-970, Campina Grande - PB - Brazil}
\email{\href{mailto:coalves@dme.ufcg.edu.br}{coalves@dme.ufcg.edu.br}}

\address[Giovany M. Figueiredo]
{\newline\indent Faculdade de Matem\'atica
\newline\indent 
Universidade Federal do Par\'a
\newline\indent
66075-110, Bel\'em - PA, Brazil}
\email{\href{mailto:giovany@ufpa.br}{giovany@ufpa.br}}

\address[Gaetano Siciliano]{\newline\indent Departamento de Matem\'atica
\newline\indent 
Instituto de Matem\'atica e Estat\'istica
\newline\indent 
 Universidade de S\~ao Paulo 
\newline\indent 
Rua do Mat\~ao 1010,  05508-090 S\~ao Paulo, SP, Brazil }
\email{\href{mailto:sicilian@ime.usp.br}{sicilian@ime.usp.br}}


%

\pretolerance10000


\begin{abstract}
\noindent In this paper we study existence of ground state solution to the following problem
$$
(- \Delta)^{\alpha}u   = g(u) \ \ \mbox{in} \ \ \mathbb{R}^{N}, \ \ u \in H^{\alpha}(\mathbb R^N)
$$  
where $(-\Delta)^{\alpha}$ is the fractional Laplacian, $\alpha\in (0,1)$. 
We treat both cases $N\geq2$ and $N=1$ with $\alpha=1/2$.
The function $g$ is a general nonlinearity of Berestycki-Lions type
which is allowed to have critical growth: polynomial in case $N\geq2$, exponential if $N=1$.
%
\end{abstract}

\thanks{Claudianor Alves was partially supported by CNPq/Brazil Proc. 304036/2013-7 ; Giovany M. Figueiredo was partially
supported by  CNPq, Brazil; Gaetano Siciliano  was partially supported by
Fapesp and CNPq, Brazil. }
\subjclass[2010]{Primary 35J60; Secondary 35C20, 35B33, 49J45.} 
\keywords{Fractional Laplacian, Berestycki - Lions type nonlinearity, critical growth}

\maketitle

\section{Introduction}

In the present paper, we are interesting in the existence of ground state solution for a class of nonlocal problem of the following type 
\begin{equation}\tag{P}\label{eq:P}
(- \Delta)^{\alpha}u   = g(u), \quad \mbox{in} \quad \mathbb{R}^N
\end{equation}
where $N \geq 1$, $\alpha \in (0,1),$ $(- \Delta)^{\alpha}$ denotes the fractional Laplacian operator and $g$ is a $C^{1}-$function verifying some conditions which will be mentioned later on. 

\medskip

The main motivation for this paper comes from the papers Berestycki and Lions \cite{BL} and Berestycki, Gallouet and Kavian \cite{BGK} which have studied the existence of solution for \eqref{eq:P} in the local case $\alpha =1$, that is, for a class of elliptic equations like 
\begin{equation}\label{LE1*}
- \Delta u   = g(u), \quad \mbox{in} \quad \mathbb{R}^N, 
\end{equation}
where $N \geq 2$, $\Delta$ denotes the Laplacian operator and $g$ is a continuous function verifying some conditions. In \cite{BL}, Berestycki  and Lions have assumed $N \geq 3$ and the following conditions on $g$:
$$
- \infty < \liminf_{s \to 0^+}\frac{g(s)}{s} \leq \limsup_{s \to 0^+}\frac{g(s)}{s}\leq -m<0, 
$$
$$
\limsup_{s \to 0^+}\frac{g(s)}{s^{2^{*}-1}}\leq 0, 
$$
$$
\mbox{there is} \quad \xi>0 \, \, \mbox{such that} \,\, G(\xi)>0, 
$$
where $G(s)=\int_{0}^{s}g(t)\,dt$.

In \cite{BGK}, Berestycki, Gallouet and Kavian have studied the case where $N=2$ and the nonlinearity $g$ possesses an exponential growth of the type
$$
\limsup_{s \to 0^+}\frac{g(s)}{e^{\beta s^2}}=0, \quad \forall \beta >0.
$$

In the two papers above mentioned, the authors have used the variational method to prove the existence of solution for \eqref{eq:P}. The main idea is to solve the minimization problem 
$$
\min \left\{\frac{1}{2}\int_{\mathbb{R}^N}|\nabla u|^{2} \,dx \,:\, \int_{\mathbb{R}^N}G(u)\,dx=1  \right\}  
$$
and 
$$
\min \left\{\frac{1}{2}\int_{\mathbb{R}^N}|\nabla u|^{2}\,dx \,:\, \int_{\mathbb{R}^N}G(u)\,dx=0  \right\}  
$$
for $N \geq 3$ and $N=2$ respectively. After that, the authors showed that the minimizer functions of the above problem are in fact ground state solutions of (\ref{LE1*}). By a ground state solution, we mean a solution $u \in H^{1}(\mathbb{R}^N)$ which satisfies
$$
E(u) \leq E(v) \quad \mbox{for all nontrival solution} \  v  \ \text{of} \ (\ref{LE1*}),
$$ 
where $E:H^{1}(\mathbb{R}^N) \to \mathbb{R}$ is the energy functional associated to \eqref{eq:P} given by
$$
E(u)=\frac{1}{2}\int_{\mathbb{R}^N}|\nabla u|^{2}\,dx - \int_{\mathbb{R}^N}G(u)\,dx.
$$

After, Jeanjean and Tanaka in \cite{JJTan} showed that the mountain pass level of  $E$ is a critical level and it is 
indeed the lowest critical level.

In the above mentioned papers, the nonlinearity does not have critical growth. Motivated by this fact,  Alves, Montenegro and  Souto in \cite{AlvesSoutoMontenegro} have studied the existence of ground state solution for 
\eqref{eq:P} by supposing that $g(s)=f(s)-s$ and that $f$ may have critical growth, more precisely, the following condition were considered:    
\begin{equation*}
\lim_{s \to 0}\frac{f(s)}{s}=0
\end{equation*}
\begin{equation*}
\limsup_{s \to +\infty}\frac{f(s)}{s^{2^{*}-1}}\leq 1, \,\, \mbox{if} \,\, N \geq 3 
\end{equation*}
\begin{equation*}
\lim_{s \to +\infty}\frac{f(s)}{e^{\beta s^{2}}}=0 \,\, (\beta < \beta_0) \,\, \mbox{if} \,\, \beta > \beta_0 \,\, ( \beta < \beta_0) \,\, \mbox{when} \,\, N=2
\end{equation*}
\begin{equation*}
H(s)=f(s)s-2F(s) \geq  0 \quad \forall s > 0 \quad \mbox{where} \quad F(s)=\int_{0}^{s}f(t)\,dt,
\end{equation*}
\noindent there is $\tau >0$ and $q \in (2,2^{*})$ if $N \geq 3$ and $q \in (2, +\infty)$ if $N=2$ such that
\begin{equation*}
f(s) \geq \tau s^{q-1}, \quad \forall s \geq 0. 
\end{equation*}

By using the variational method, the authors in  \cite{AlvesSoutoMontenegro} give
 a unified approach in order to deal with subcritical and critical case. However, we would like to point out that the Concentration Compactness Principle of Lions \cite{lionsI} was crucial for the case $N \geq 3$. For the case $N=2$, as in the previous references, a Trudinger-Moser inequality due to Cao \cite{Cao} was the main tool used. A similar study was made for the critical case and $N \geq 3$ in Zhang  and Zou \cite{ZZ}.

\medskip

After a review bibliographic, we have observed that there is no a version of the paper \cite{AlvesSoutoMontenegro} for the fractional Laplacian operator.  Motivated by this fact, we have decide to study this class of problem. However, we would like point out that some estimates made in \cite{AlvesSoutoMontenegro} are not immediate for fractional Laplacian operator. For example, there is some restriction to use Concentration Compactness Principle of Lions \cite{lionsI} as  mentioned in Palatucci and Pisante \cite[Theorem 1.5]{PalatucciPisante} for the dimension $ N\geq 2$ and $\alpha \in (0,1)$. To overcome this difficulty, we use a new approach which do not use the Concentration Compactness Principle of \cite{PalatucciPisante}. For the dimension $N=1$ and $\alpha =1/2$, we  use a Trudinger-Moser inequality due to Ozawa \cite{Ozawa} which also permits to apply variational methods in this case. Here, it is very important to mention that Zhan, do \'O and Squassina \cite[Theorem 4.1]{ZOS} studied the existence of ground state solution for \eqref{eq:P} for $N \geq 2$, by supposing that $g$ satisfies  
$$
\lim_{s \to +\infty}\frac{g(s)}{s^{2^{*}_\alpha -1}}=b>0.
$$  
These condition is not assumed in our paper, and so, our results complete the studied made in that paper.

Before stating our main results, we must fix some notations. We will look for weak solutions of \eqref{eq:P}
hence the natural setting  involves the fractional Sobolev spaces $H^\alpha(\mathbb R^N)$ defined as
\[
H^\alpha(\mathbb R^N)=\big\{u\in L^2(\mathbb R^N):\,
(-\Delta)^{\alpha/2}u \in L^2(\mathbb R^N)\big\}
\]
endowed with scalar product and (squared) norm given by
\[
(u,v)= \int_{\mathbb R^{N}} (-\Delta)^{\alpha/2}u (-\Delta)^{\alpha/2}v \, dx+ \int_{\mathbb R^{N}} uv \, dx,
\qquad
\|u\|^2=|(-\Delta)^{\alpha/2}u |_2^2+ |u|_2^2.
\]
It is well known that $H^\alpha(\mathbb R^N)$ is a Hilbert space with the above scalar product. 
We are denoting with $|u|_{p} = (\int_{\mathbb R^{N}}|u|^{p}dx)^{1/p}$ the $L^{p}-$norm of $u$, and by $(-\Delta)^\alpha$ the fractional Laplacian, which is the pseudodifferential operator defined via the Fourier transform of the following way
$$
\mathcal F((-\Delta)^{\alpha}u)=|\cdot|^{2\alpha}\mathcal Fu.
$$
It is known that  $H^{\alpha}(\mathbb R^{N})$ has continuos embedding into $L^{q}(\mathbb R^{N})$
for suitable $q$ depending on $N$: we will denote by $C_{q}>0$ the embedding constant.

It is useful to introduce  also the  homogeneous fractional Sobolev space
$$ \mathcal{D}^{\alpha, 2}(\R^N) = \Big\{u \in L^{2^{*}_{\alpha}}(\R^N):     \ \int_{\R^N} |(-\Delta)^{\alpha/2}u |^{2}  \, dx < \infty  \Big\}
$$ 
where hereafter $2^{*}_{\alpha}=\frac{2N}{N-2 \alpha}$ for $N \geq 2$.  It is well known that the following inequality holds 
\begin{equation} \label{definicaoS}
S \, \left( \int_{\R^N} |u|^{2^{*}_{\alpha}} \, dx   \right)^{2/2^{*}_{\alpha}} \leq  \int_{\R^N} |(-\Delta)^{\alpha/2}u |^{2} \, dx  \quad\hbox{for all $u \in \mathcal{D}^{\alpha, 2}(\R^N)$}   
\end{equation}
for some positive $S>0$. For these facts and the relation between the fractional Laplacian and the fractional Sobolev space $H^{\alpha}(\mathbb R^{N})$, we refer the reader to classical books on Sobolev space, and to the monograph  \cite{DPV}.

\medskip

We will study \eqref{eq:P} by variational methods: 
its solutions will be found as  critical points of a $C^{1}$ functional 
$I:H^{\alpha}(\mathbb R^{N}) \to \mathbb R$. 
Actually our results concern the existence of {\sl ground state solutions}, 
that is a solution $u \in H^\alpha(\R^N)$ such that $I(u) \leq I(v)$ for every nontrivial solution $v \in H^\alpha(\R^N)$ of \eqref{eq:P}.  In view of this, we make the following assumptions on the nonlinearity $f$.
More precisely 
we assume that
 $f:\mathbb R\to \mathbb R$ is a $C^{1}$-function satisfying  

\medskip

\begin{enumerate}[label=(f\arabic*),ref=f\arabic*,start=1]
\item\label{f_{1}} $\displaystyle\lim_{s \rightarrow 0^+} f(s)/ s =0$; \smallskip
\item \label{f_{2}} $\displaystyle \limsup_{s \rightarrow + \infty} f(s) / s^{2^{*}_\alpha -1} \leq 1$; \smallskip
\item\label{f_{3}} $ f(s) s - 2F(s) \geq 0$ for $s > 0$, 
where $F(s)= \int_0^s f( t) \, dt$; \medskip 
\item\label{f_{4}} 
$f(s) \geq \tau s^{q-1}$,  $s \in \R$ with $s \geq 0$, where \medskip
\begin{itemize}
\item If $N\geq2$, we assume $q \in (2, 2^{*}_{\alpha})$ and  \medskip
$$\tau>\tau^{*}:=\left[ 2^{(2\alpha-N)/2\alpha} S^{-N/2\alpha} \frac N\alpha \left(\frac{2N}{N-2\alpha}\right)^{(N-2\alpha)/2\alpha}\right]^{(q-2)/2} 
\left( \frac{q-2}{2q} \right)^{(q-2)/2} C_q^{q/2},$$
\medskip

\item If $N=1$, we assume $q>2$ and 
$$
\tau>\tau^* = \left( \frac{q-2}{q} \right)^{(q-2)/2} C_q^{q/2};
$$
\end{itemize}
\medskip
\item\label{f_{5}} there exist $\omega \in (0,\pi)$ and 
$\beta_0 \in (0, \omega],$
such that
$$
\lim_{s\rightarrow  +\infty}\frac{f(s)}{e^{\beta s^2}}=0, \ \forall \beta > \beta_0,\quad \mbox{and} \quad  
\lim_{s \rightarrow +\infty}\frac{f(s)}{e^{\beta s^2}}=+\infty, \ \forall \beta < \beta_0.
$$
\end{enumerate}

As we can see, a critical growth for the function $f$ is allowed.
Note also that a weaker condition than the usual Ambrosetti-Rabinowitz condition is imposed on $f$, see condition 
\eqref{f_{3}}.

\medskip

Our main results are the following one.

\begin{theorem}\label{groundN3}
Suppose  that  $N \geq 2$  and $f$ satisfies   \eqref{f_{1}}-\eqref{f_{4}}. Then
 problem  \eqref{eq:P} admits a ground state solution which is non-negative, radially symmetric and decreasing.
 
\end{theorem}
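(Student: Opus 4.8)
The plan is to obtain the ground state as a mountain--pass critical point of the energy functional
\[
I(u)=\frac12\|u\|^{2}-\int_{\R^{N}}F(u)\,dx,\qquad u\in H^{\alpha}(\R^{N}),
\]
associated with \eqref{eq:P} written in the form $(-\Delta)^{\alpha}u+u=f(u)$ (so $g(s)=f(s)-s$), and then to identify that level with the least energy among nontrivial solutions. First I would replace $f$ by its truncation at $0$, i.e.\ set $f(s):=0$ for $s<0$; since $f(0)=0$ by \eqref{f_{1}} this does not affect non-negative solutions, and the strong maximum principle for $(-\Delta)^{\alpha}$ will later force any nontrivial solution to be positive. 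From \eqref{f_{1}}--\eqref{f_{2}} one gets, for every $\varepsilon>0$, a bound $|F(s)|\le\varepsilon(s^{2}+|s|^{2^{*}_{\alpha}})+C_{\varepsilon}|s|^{q}$, whence $I(0)=0$ and $I\ge\delta>0$ on a small sphere $\{\|u\|=\rho\}$; while \eqref{f_{4}} gives $F(s)\ge\frac{\tau}{q}s_{+}^{q}$ with $q>2$, so $I(t\varphi)\to-\infty$ as $t\to+\infty$ for fixed $\varphi\ge0$. Thus $I$ has the mountain--pass geometry and a level $c>0$ is well defined. Throughout I would work in the radial subspace $H^{\alpha}_{\mathrm{rad}}(\R^{N})$: by the fractional P\'olya--Szeg\H{o} inequality $|(-\Delta)^{\alpha/2}u^{*}|_{2}\le|(-\Delta)^{\alpha/2}u|_{2}$ together with equimeasurability of $u$ and $u^{*}$ one has $I(u^{*})\le I(u)$, so $c$ is unchanged under this restriction, and there the compact embedding $H^{\alpha}_{\mathrm{rad}}(\R^{N})\hookrightarrow L^{p}(\R^{N})$, $2<p<2^{*}_{\alpha}$, is available.

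Next I would produce a bounded Palais--Smale (or Cerami) sequence $(u_{n})$ for $I$ at level $c$. Since the Ambrosetti--Rabinowitz condition is not assumed, boundedness is not immediate from \eqref{f_{3}}; I would obtain it through the monotonicity trick of Jeanjean, working with the family $I_{\lambda}(u)=\frac12|(-\Delta)^{\alpha/2}u|_{2}^{2}-\lambda\int_{\R^{N}}(F(u)-\tfrac12u^{2})\,dx$ for $\lambda$ near $1$, which yields for a.e.\ $\lambda$ a bounded Palais--Smale sequence at the corresponding level $c_{\lambda}$; alternatively the bound on $|(-\Delta)^{\alpha/2}u_{n}|_{2}$ can be read off from the Pohozaev identity $\frac{N-2\alpha}{2}|(-\Delta)^{\alpha/2}v|_{2}^{2}=N\int_{\R^{N}}(F(v)-\tfrac12v^{2})\,dx$ valid for solutions. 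One then lets $\lambda\to1$ at the end.

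The core of the argument --- and what replaces the concentration--compactness principle of \cite{PalatucciPisante} --- is to show that a bounded Palais--Smale sequence in $H^{\alpha}_{\mathrm{rad}}(\R^{N})$ at a level $c$ strictly below the threshold $c^{*}=\frac{\alpha}{N}S^{N/2\alpha}$ (the least-energy level of the limiting critical problem $(-\Delta)^{\alpha}w=w_{+}^{2^{*}_{\alpha}-1}$, with $S$ as in \eqref{definicaoS}) is relatively compact. Passing to a weak limit $u$, which solves the equation, and writing $w_{n}=u_{n}-u$, the subcritical-type contributions --- obtained by splitting $f(s)=\big(f(s)-s_{+}^{2^{*}_{\alpha}-1}\big)+s_{+}^{2^{*}_{\alpha}-1}$ and using \eqref{f_{1}}--\eqref{f_{2}} --- vanish along $(w_{n})$ by the radial compact embedding, while the critical contributions are handled by the Brezis--Lieb lemma. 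The defect $b=\lim|(-\Delta)^{\alpha/2}w_{n}|_{2}^{2}$ then satisfies, via \eqref{definicaoS} and $I'(u_{n})\to0$, either $b=0$ or $b\ge S^{N/2\alpha}$; in the latter case $c=I(u)+\frac{\alpha}{N}b+o(1)\ge\frac{\alpha}{N}S^{N/2\alpha}=c^{*}$, where $I(u)=I(u)-\tfrac12 I'(u)[u]=\int_{\R^{N}}(\tfrac12 f(u)u-F(u))\,dx\ge0$ by \eqref{f_{3}}, contradicting $c<c^{*}$. Hence $b=0$, $u_{n}\to u$ strongly, and since $I(u)=c>0$ the limit $u$ is a nontrivial critical point of $I$ at the mountain--pass level. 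For this scheme to apply one must know $c<c^{*}$, and this is exactly where the explicit value $\tau^{*}$ enters: a suitable choice of test functions, exploiting only the lower bound $F(s)\ge\frac{\tau}{q}s_{+}^{q}$ from \eqref{f_{4}}, makes the maximum of $I$ along the corresponding mountain--pass path strictly smaller than $c^{*}$ precisely when $\tau>\tau^{*}$. I expect this step --- proving compactness below $c^{*}$ without the concentration--compactness principle, and tuning the constants so that $\tau>\tau^{*}$ indeed forces $c<c^{*}$ --- to be the main technical obstacle.

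Finally, to see that $u$ is a ground state I would argue as in Jeanjean--Tanaka via the Pohozaev identity: given any nontrivial solution $v$, the rescaled path $t\mapsto v(\cdot/t)$ satisfies $v(\cdot/t)\to0$ in $H^{\alpha}(\R^{N})$ as $t\to0^{+}$, $I(v(\cdot/t))\to-\infty$ as $t\to+\infty$, and --- because $v$ satisfies the Pohozaev identity --- $\max_{t>0}I(v(\cdot/t))=I(v)$; after reparametrisation this is an admissible path, so $I(v)\ge c=I(u)$. Thus $u$ attains the least energy among nontrivial solutions. Non-negativity of $u$ follows from the truncation of $f$ and the maximum principle; boundedness and continuity follow from a standard bootstrap using \eqref{f_{1}}--\eqref{f_{2}}; and the radial symmetry and monotonicity of $u$ follow from the fact that the whole construction is carried out within the cone of radial decreasing functions (alternatively, from the strict version of the fractional P\'olya--Szeg\H{o} inequality, which forces $u=u^{*}$).
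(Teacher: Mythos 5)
Your route (direct mountain pass, monotonicity trick, Brezis--Lieb defect analysis, Jeanjean--Tanaka comparison) is genuinely different from the paper's, which instead minimizes $T(u)=\tfrac12\int|(-\Delta)^{\alpha/2}u|^2dx$ on the constraint $\mathcal M=\{\int G(u)=1\}$, symmetrizes, and compares $D$ with the mountain--pass level through the Pohozaev manifold (Lemmas \ref{blowbound}, \ref{nuSD}, \ref{lambdabbound}). But as written your key compactness step has a genuine gap: the identity $c=I(u)+\tfrac{\alpha}{N}b+o(1)$ for the defect $b=\lim|(-\Delta)^{\alpha/2}w_n|_2^2$ does not follow from \eqref{f_{2}}. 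That hypothesis is only a one--sided bound, so all you can extract is $\lim\int F(w_n)\le \tfrac{1}{2^*_\alpha}\nu+o(1)$ with $\nu=\lim|w_n|_{2^*_\alpha}^{2^*_\alpha}$, together with $\nu\le (b/S)^{2^*_\alpha/2}$ from \eqref{definicaoS} and $\nu\ge b+o(1)$ from $I'(u_n)\to0$. This gives only $c\ge I(u)+\tfrac12 b-\tfrac{1}{2^*_\alpha}(b/S)^{2^*_\alpha/2}$, and the right--hand side is \emph{decreasing} in $b$ beyond $b=S^{N/2\alpha}$ (it even becomes negative), so the dichotomy ``$b=0$ or $c\ge \frac{\alpha}{N}S^{N/2\alpha}$'' is not established: a large defect is not excluded. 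Energy quantization of the tail requires either a matching lower bound $f(s)\sim s^{2^*_\alpha-1}$ at infinity (exactly the hypothesis of Zhang--do \'O--Squassina that this paper avoids), a profile decomposition \`a la Palatucci--Pisante (which the paper deliberately does not use), or extra structure on the Palais--Smale sequence. The natural repair within your scheme is to use the Palais--Smale sequences produced by the monotonicity trick, which asymptotically satisfy the Pohozaev identity: then $|(-\Delta)^{\alpha/2}u_n|_2^2\to \frac{N}{\alpha}c$, hence $b\le \frac{N}{\alpha}c<S^{N/2\alpha}$ whenever $c<\frac{\alpha}{N}S^{N/2\alpha}$, and the dichotomy forces $b=0$ with no energy identity needed. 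This is morally what the paper does: in its constrained problem the defect is automatically bounded by $2D$, which yields $D\ge 2^{-2\alpha/N}S$ (Lemma \ref{nuSD}) and contradicts the bound on $b$ calibrated by $\tau>\tau^*$ (Lemma \ref{lambdabbound}); note that the paper's threshold carries the extra factor $\left(\frac{N-2\alpha}{N}\right)^{(N-2\alpha)/2\alpha}$ precisely because of this factor $2$, and $\tau>\tau^*$ does imply your weaker requirement $c<\frac{\alpha}{N}S^{N/2\alpha}$.

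Two further points need attention. First, you run the mountain pass in $H^{\alpha}_{\mathrm{rad}}$, but the Jeanjean--Tanaka comparison only shows $I(v)\ge c_{\mathrm{full}}$ for every nontrivial (possibly non--radial) solution $v$, while your critical point lives at the radial level $c_{\mathrm{rad}}\ge c_{\mathrm{full}}$; symmetrizing a path is not continuous, so the identification $c_{\mathrm{rad}}=c_{\mathrm{full}}$ has to be argued (e.g.\ through the Pohozaev least--energy characterization, where symmetrization plus rescaling works), whereas in the paper's constrained minimization symmetrization acts harmlessly on $\mathcal M$ and the comparison with arbitrary solutions is done as in \cite[Theorem 3]{BL}. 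Second, from $b=0$ you get strong convergence only in $\mathcal D^{\alpha,2}$; to conclude $I(u)=c>0$ (hence $u\neq0$) you still need to handle the $L^2$ part, e.g.\ by the standard contradiction argument using the radial compact embedding and $I'(u_n)[u_n]\to0$, and the ``decreasing'' property of the solution should be obtained from the symmetrized minimizing/approximating sequence as in Lemma \ref{positividade}, not merely from working in the radial subspace.
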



\begin{theorem}\label{groundN1}
Suppose  that  $N = 1$  and $f$ satisfies \eqref{f_{1}}, \eqref{f_{4}} and \eqref{f_{5}}. Then
problem  \eqref{eq:P} admits a ground state solution
a ground state solution which is non-negative, radially symmetric and decreasing.
%
\end{theorem}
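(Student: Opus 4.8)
The plan is to realize the solutions of \eqref{eq:P} --- here $g(s)=f(s)-s$, as in \cite{AlvesSoutoMontenegro} --- as critical points of the $C^{1}$ functional
$$
I(u)=\frac12\|u\|^{2}-\int_{\R}F(u)\,dx,\qquad u\in H^{1/2}(\R),
$$
and to single out a ground state by a mountain pass scheme. First I would check that $I$ has the mountain pass geometry. From \eqref{f_{1}} and \eqref{f_{5}}, for each $\varepsilon>0$ there is $C_{\varepsilon}>0$ with $F(s)\le\varepsilon s^{2}+C_{\varepsilon}\big(e^{(\beta_{0}+\varepsilon)s^{2}}-1\big)$ for all $s$, so the Trudinger--Moser inequality of Ozawa \cite{Ozawa} gives $\int_{\R}F(u)\,dx\le\frac14\|u\|^{2}$ whenever $\|u\|$ is small, whence $I\ge\rho>0$ on a small sphere; on the other hand \eqref{f_{4}} yields $F(s)\ge\frac{\tau}{q}s^{q}$ for $s\ge0$, so $I(tv)\to-\infty$ as $t\to+\infty$ for any fixed nonnegative $v\not\equiv0$. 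This produces a minimax level $c=\inf_{\gamma\in\Gamma}\max_{t\in[0,1]}I(\gamma(t))>0$, $\Gamma$ being the usual family of paths from $0$ to a point of negative energy.

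The heart of the argument is a sharp upper bound for $c$. Using $F(s)\ge\frac{\tau}{q}s^{q}$ together with the precise value of $\tau^{*}$, I would estimate $\max_{t\ge0}I(t\varphi)$ for a carefully chosen $\varphi$ --- for instance a normalized extremal for the embedding $H^{1/2}(\R)\hookrightarrow L^{q}(\R)$ with constant $C_{q}$ --- so as to obtain $c<c^{*}$, where $c^{*}$ is the threshold, depending on $\omega,\beta_{0}$ from \eqref{f_{5}} and the constant in Ozawa's inequality, below which the critical exponential term stays controllable along sequences of bounded norm. This is precisely why the explicit lower bound $\tau>\tau^{*}$ is imposed in \eqref{f_{4}}.

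Next I would prove compactness of Palais--Smale sequences $(u_{n})$ of $I$ at level $c$. The first step is their boundedness in $H^{1/2}(\R)$; since \eqref{f_{3}} is not assumed here, I would either exploit the super--quadratic control of \eqref{f_{4}} directly or, if needed, invoke Jeanjean's monotonicity trick applied to the family $I_{\lambda}(u)=\frac12\|u\|^{2}-\lambda\int_{\R}F(u)\,dx$, producing for a.e.\ $\lambda$ a bounded Palais--Smale sequence and then letting $\lambda\to1$. Granting boundedness, write $u_{n}\rightharpoonup u$; the bound $c<c^{*}$ forces $\beta\,\limsup_{n}|(-\Delta)^{1/4}u_{n}|_{2}^{2}<\pi$ for some $\beta>\beta_{0}$, so Ozawa's inequality provides higher integrability of $e^{\beta u_{n}^{2}}$, and with \eqref{f_{1}} and \eqref{f_{5}} one gets $\int_{\R}f(u_{n})(u_{n}-u)\,dx\to0$ and $\int_{\R}F(u_{n})\,dx\to\int_{\R}F(u)\,dx$; hence $u_{n}\to u$ strongly, $u$ is a nontrivial critical point and $I(u)=c$. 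I expect this compactness analysis to be the main obstacle: one must simultaneously secure boundedness of Palais--Smale sequences without an Ambrosetti--Rabinowitz condition, keep the minimax level strictly below the Ozawa threshold $c^{*}$, and rule out any loss of compactness of the exponential nonlinearity in the weak limit.

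It remains to identify $u$ as a ground state with the stated qualitative properties. That $c$ is the least nonzero critical level follows by the Jeanjean--Tanaka comparison \cite{JJTan} adapted to the nonlocal setting: for any nontrivial solution $v$ the fractional Pohozaev identity reduces, since $N=2\alpha$, to $\int_{\R}G(v)\,dx=0$, and the two--parameter family $(t,s)\mapsto t\,v(\cdot/s)$, completed by a descending path, yields an admissible curve on which $I$ does not exceed $I(v)$; hence $c\le I(v)$, and since $I(u)=c$ with $u$ a solution, $c$ is exactly the ground state level. Replacing $f$ by the function equal to $f$ on $[0,+\infty)$ and to $0$ on $(-\infty,0)$ (still $C^{1}$ by \eqref{f_{1}}) and testing the equation with $u^{-}$, via $\langle(-\Delta)^{1/4}u,(-\Delta)^{1/4}u^{-}\rangle\le-|(-\Delta)^{1/4}u^{-}|_{2}^{2}$, forces the solution to be nonnegative, and it still solves \eqref{eq:P}; a standard bootstrap from \eqref{f_{1}} and \eqref{f_{5}} gives regularity and decay at infinity; finally, combining the variational characterization $c=\inf\{\tfrac12|(-\Delta)^{1/4}v|_{2}^{2}:\ v\ne0,\ \int_{\R}G(v)\,dx=0\}$ of the ground state level with the invariance of $\int_{\R}F(\cdot)\,dx$ and $|\cdot|_{2}$ under Schwarz symmetrization and the fractional P\'olya--Szeg\H{o} inequality $|(-\Delta)^{1/4}u^{*}|_{2}\le|(-\Delta)^{1/4}u|_{2}$, one sees that $u^{*}$ is again a ground state, i.e.\ a nonnegative, radially symmetric and nonincreasing solution of \eqref{eq:P}.
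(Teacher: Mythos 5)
Your route (direct mountain pass on $I$, compactness of Palais--Smale sequences below an Ozawa threshold, then a Jeanjean--Tanaka comparison to identify the least critical level) is genuinely different from the paper's, which follows Berestycki--Gallouet--Kavian: it minimizes $T(u)=\frac12|(-\Delta)^{1/4}u|_2^2$ on the zero-constraint set $\mathcal M=\{u\neq0:\int_{\R}G(u)\,dx=0\}$ of \eqref{eq:MN=1}, uses Ozawa's inequality (Theorem \ref{th:Ozawa}) together with the Strauss compactness lemma (Lemma \ref{ConvergenceofF}) on the minimizing sequence, exploits $D\le b<1/2$ (Lemmas \ref{Dandb} and \ref{lambdabbound1}, where $\tau>\tau^*$ enters) to keep $|(-\Delta)^{1/4}u_n|_2^2<1$ within Ozawa's range, and finally removes the Lagrange multiplier by dilation, the constraint $\int G=0$ and the seminorm being scale invariant when $N=2\alpha$.

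However, as written your compactness step has a genuine gap. Theorem \ref{groundN1} does not assume \eqref{f_{3}}, and for a Palais--Smale sequence of $I$ the only available identity is $\frac12\|u_n\|^2=c+\int_{\R}F(u_n)\,dx+o(1)$; without \eqref{f_{3}} (or an Ambrosetti--Rabinowitz condition, or an asymptotic Pohozaev constraint) this gives no control whatsoever of $\|u_n\|$, and in particular your key assertion that $c<c^*$ \emph{forces} $\limsup_n|(-\Delta)^{1/4}u_n|_2^2$ to fall inside Ozawa's admissible range is unjustified: the Pohozaev identity $\int_{\R}G(v)\,dx=0$, which is what converts the level into the seminorm ($I(v)=\frac12|(-\Delta)^{1/4}v|_2^2$ when $N=2\alpha$), holds for solutions but not for Palais--Smale sequences. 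To make your scheme work you would need Jeanjean-type scaled paths producing Pohozaev--Palais--Smale sequences (the monotonicity trick you invoke only for boundedness), which is exactly the difficulty the paper's constrained minimization sidesteps, since on $\mathcal M$ one has $I=T$ identically and the bound $b<1/2$ immediately yields $2D<1$. Two smaller points: your mountain-pass geometry estimate does not follow as stated, because Ozawa bounds $\int(e^{ru^2}-1)\,dx$ by $H_r|u|_2^2$, which is of the same order as $\|u\|^2$; one needs the refined estimate $|F(s)|\le\varepsilon s^2+C_\varepsilon|s|^{q}\bigl(e^{\beta s^2}-1\bigr)$ with $q>2$ plus H\"older. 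Also the admissible exponential range is governed by the constant $\omega\in(0,\pi]$ of Theorem \ref{th:Ozawa} (and $\beta_0\le\omega$ in \eqref{f_{5}}), not by $\pi$ itself.
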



\medskip

The plan of the paper is the following: In Section \ref{tools} we study the case
$N\geq2$.  We first introduce the variational framework, then give some preliminaries results and Lemmas 
which will be useful to prove Theorem \ref{groundN3}.
In Section \ref{tools1} we consider the case $N=1$ and $\alpha=1/2$, where again, after some preliminaries,
the proof of Theorem \ref{groundN1} is given.

\medskip 

Before concluding this introduction, we would like to cite some papers involving the fractional Laplacian operator where the problem is related to the problem \eqref{eq:P} in some sense, see for example,  Ambrosio \cite{Am}, Barrios, Colorado, de Pablo and S\'anchez \cite{barrios},  Frank and Lenzmann \cite{FL}, Felmer, Quass and Tan \cite{FQT}, Iannizzotto and Squassina \cite{IS}, Zhang, do \'O and Squassina \cite{ZOS} and their references.

\medskip

\noindent {\bf Notations} As a matter of notations, we will use in all the paper the letter $C, \overline C, C', \ldots$  to denote 
suitable positive constants whose exact value is insignificant for our purpose.

\section{The case $N\geq2$}\label{tools}
\subsection{The variational framework}



%

\noindent The energy functional $I: H^\alpha(\R^N) \rightarrow \R$ associated to equation \eqref{eq:P}
 is defined as follows 
$$I(u) = \frac 12 \| u \|^2 -  \int_{\R^N} F(u) \, dx.$$
Under assumptions  \eqref{f_{1}} and \eqref{f_{2}}, $I \in C^1(H^{\alpha}(\R^N), \R)$  with  Frech\'et derivative given by 
$$
I'(u)[v] = \displaystyle\int_{\mathbb{R}^{N}}(-\Delta)^{\alpha/2}u(-\Delta)^{\alpha/2}v dx+ \displaystyle\int_{\mathbb{R}^{N}} uv dx - 
\int_{\R^N} f(u) v dx, \quad \forall u,v \in H^\alpha(\R^N).
$$
Hence the critical points are easily seen to be  weak solutions to \eqref{eq:P}.

\medskip

We remark two inequalities which will be frequently used in the sequel.
From \eqref{f_{1}} and \eqref{f_{2}}, for any $\varepsilon > 0$ there exists $C_{\varepsilon} > 0$ such that 
\begin{equation} \label{ESTIMATIVA1}
|f(s) | \leq \varepsilon |s|+ C_{\varepsilon} |s|^{2^{*}_{\alpha} -1} \quad \hbox{for all $s >0$}
\end{equation}
and, then by integration, 
\begin{equation} \label{ESTIMATIVA2}
 |F(s) | \leq \frac{\varepsilon}{2} s^{2} + C_{\varepsilon} |s|^{2^{*}_{\alpha} } \quad \hbox{for all $s >0$}.
\end{equation}

\medskip


\medskip


Once we intend to find nonnegative solution, we will assume that $f(s) =0$ for every $s \leq 0$. 
Let us consider the set of non-zero critical points of $I$, that is non trivial solution of \eqref{eq:P},
$$
\Sigma = \{  u \in H^\alpha(\R^N) \setminus \{0 \}: I'(u) =0 \}, 
$$  
and define
$$  
m = \inf_{u \in \Sigma} I(u)
$$
the so called {\sl ground state level}. 

Now, denoting with   $G(u) =  F(u)  -\displaystyle\frac{u^2}{2}$, the primitive of
$g(u) = f(u)  -u$,  let us introduce the set
\begin{equation}\label{eq:M}
\mathcal{M}=  \left\{ u \in H^\alpha(\R^N) \setminus \{0 \}: \int_{\R^N} G(u) \, dx =1 \right \} 
\end{equation} 
and 
\begin{equation}\label{eq:D}
T(u) =\frac 12 \int_{\R^N}  |(-\Delta)^{\alpha/2}u|^2  dx,\qquad D =  \inf_{u\in \mathcal M}  T(u). 
\end{equation}

In particular
$$ 2 D = \inf_{u \in \mathcal{M}} \Big\{  \int_{\R^N}  |(-\Delta)^{\alpha/2}u|^2  dx\Big\}. 
$$ 
It is worth to point out that if we define the $C^{1}$ functional
\begin{equation*}\label{eq:J}
J(u) := \int_{\mathbb R^{N}} G(u)dx -1,
\end{equation*}
it holds   from \eqref{f_{3}}:
\begin{equation}\label{eq:positividade}
u\in \mathcal M \Longrightarrow J'(u)[u]=\int_{\mathbb R^{N}}(f(u)u - u^{2}) dx =
 \int_{\mathbb R^{N}} (f(u) u - 2F(u))dx +2\int_{\mathbb R^{N}}G(u)\geq 2.
\end{equation}
The last information will be used later on.

In addition, we define the min-max level associated to the functional $I$
\begin{equation}\label{eq:b}
b = \inf_{\gamma \in \Gamma} \max_{t \in [0, 1]} I(\gamma(t))
\end{equation}
where 
$$ \Gamma = \{\gamma \in C\left([0, 1], H^{\alpha}(\R^N) \right): \gamma(0) =0 \ \hbox{ and } \ I(\gamma(1)) < 0\}
$$
which is not empty since $I$ has a Mountain Pass Geometry.

Let us define also the set, usually called {\sl Pohozaev manifold},
\begin{equation*}
\mathcal{P} = \left\{u \in H^\alpha(\R^N, \R) \setminus \{ 0 \}: \frac{N-2\alpha}{2} \int_{\R^N}  |(-\Delta)^{\alpha/2}u|^2  \, dx = N  \int_{\R^N} G(u)  \, dx \right\}.
\end{equation*}
which, according to \cite[Proposition 4.1]{ChangWang}, contains any weak solution of \eqref{eq:P}. If we denote by 
$$
p = \inf_{u \in \mathcal{P}} I(u),
$$ 
from \cite[Lemma 2.4]{LilianeRquelSquassina} it holds that 
\begin{equation}\label{eq:valordep}
p =  \frac {\alpha}{N} \left(\frac{N-2\alpha}{2N} \right)^{(N-2\alpha)/2\alpha} (2D)^{N/2\alpha}.
\end{equation}

\subsection{Some preliminary stuff}

At this point we establish some preliminary  results which will be useful in order to prove Theorem \ref{groundN3}.

\begin{lemma}\label{blowbound}
It holds 
$$\frac {\alpha}{N} \left(\frac{N-2\alpha}{2N} \right)^{(N-2\alpha)/2\alpha} (2D)^{N/2\alpha} \leq b$$
where $b$ is the min-max level of $I$ defined in \eqref{eq:b}.
\end{lemma}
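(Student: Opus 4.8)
The plan is to compare the mountain-pass level $b$ with the constrained minimization value $p$ given in \eqref{eq:valordep}, and for this it clearly suffices to relate paths in $\Gamma$ to the Pohozaev manifold $\mathcal P$. First I would recall the standard scaling trick for this kind of autonomous problem: for $u\in H^\alpha(\R^N)$ with $\int_{\R^N} G(u)\,dx>0$ (equivalently, after rescaling, for $u\in\mathcal M$) consider the one–parameter family $u_t(x):=u(x/t)$ for $t>0$. Then $\int_{\R^N}|(-\Delta)^{\alpha/2}u_t|^2\,dx = t^{N-2\alpha}\int_{\R^N}|(-\Delta)^{\alpha/2}u|^2\,dx$ and $\int_{\R^N} G(u_t)\,dx = t^{N}\int_{\R^N} G(u)\,dx$, so
$$
I(u_t)=\frac{t^{N-2\alpha}}{2}\int_{\R^N}|(-\Delta)^{\alpha/2}u|^2\,dx + \frac{t^{N-2\alpha}}{2}|u|_2^2 \cdot 0 - t^N\int_{\R^N}G(u)\,dx,
$$
more precisely, writing $A=\int_{\R^N}|(-\Delta)^{\alpha/2}u|^2\,dx=2T(u)$ and $B=\int_{\R^N}G(u)\,dx$, one gets $I(u_t)=\tfrac12 t^{N-2\alpha}A - t^N B$ (here I use $I(u)=\tfrac12\|u\|^2-\int F(u) = \tfrac12\int|(-\Delta)^{\alpha/2}u|^2 - \int G(u)$, since $\tfrac12\int u^2 - \int F(u) = -\int G(u)$). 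Since $N>N-2\alpha$, the function $t\mapsto I(u_t)$ on $(0,\infty)$ starts at $0$, increases, attains a unique maximum at $t_0=\left(\tfrac{(N-2\alpha)A}{2NB}\right)^{1/2\alpha}$, and then tends to $-\infty$; the maximal value is exactly $\tfrac{\alpha}{N}\left(\tfrac{N-2\alpha}{2N}\right)^{(N-2\alpha)/2\alpha} A^{N/2\alpha} B^{-(N-2\alpha)/2\alpha}$, and the point $u_{t_0}$ lies on $\mathcal P$.

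Next I would build from this an admissible path. Extend the curve $t\mapsto u_t$ to a continuous $\gamma:[0,1]\to H^\alpha(\R^N)$ with $\gamma(0)=0$ (using that $\|u_t\|\to0$ as $t\to0^+$, which follows from the scaling identities together with $N-2\alpha>0$ and $N>0$) and with $\gamma(1)=u_{T}$ for $T$ large enough that $I(u_T)<0$; reparametrize so that $\gamma\in\Gamma$. Along this path $\max_{[0,1]} I(\gamma(t))$ equals the maximum of $t\mapsto I(u_t)$ computed above. Taking the infimum over $u\in\mathcal M$ (where $B=1$, so $A=2T(u)$ and the maximal value becomes $\tfrac{\alpha}{N}\left(\tfrac{N-2\alpha}{2N}\right)^{(N-2\alpha)/2\alpha}(2T(u))^{N/2\alpha}$) and using that by definition $2D=\inf_{u\in\mathcal M}2T(u)$, monotonicity of $x\mapsto x^{N/2\alpha}$ gives
$$
b \;\le\; \inf_{u\in\mathcal M}\ \max_{t\in[0,1]} I(\gamma_u(t)) \;=\; \frac{\alpha}{N}\left(\frac{N-2\alpha}{2N}\right)^{(N-2\alpha)/2\alpha}(2D)^{N/2\alpha}.
$$
Wait — this is the reverse inequality; the statement of the lemma is $p\le b$, i.e. $\ge$ here. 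So instead the argument must go the other way: one shows that \emph{every} $\gamma\in\Gamma$ crosses $\mathcal P$, hence $\max_{[0,1]}I(\gamma(t))\ge \inf_{\mathcal P} I = p$, whence $b\ge p$. That is the correct route, and it is the one I would actually carry out.

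So, concretely: fix an arbitrary $\gamma\in\Gamma$. I claim $\gamma([0,1])\cap\mathcal P\ne\emptyset$. Consider the function $Q(u):=\tfrac{N-2\alpha}{2}\int_{\R^N}|(-\Delta)^{\alpha/2}u|^2\,dx - N\int_{\R^N}G(u)\,dx$ (so $\mathcal P=\{u\ne0: Q(u)=0\}$). For $t$ near $0$, $\gamma(t)$ is small in $H^\alpha$, so by \eqref{ESTIMATIVA2} the term $\int G(\gamma(t))\,dx = \int F(\gamma(t)) - \tfrac12|\gamma(t)|_2^2$ is controlled and one checks $Q(\gamma(t))\ge0$ for small $t$ (the quadratic gradient term dominates; more carefully, one shows $\int_{\R^N}G(u)\,dx \le \tfrac12\int_{\R^N}|(-\Delta)^{\alpha/2}u|^2\,dx$ fails to be an obstruction because $\int G(u)<\tfrac{N-2\alpha}{2N}\int|(-\Delta)^{\alpha/2}u|^2$ near $0$, using \eqref{f_{1}}, \eqref{f_{2}} and the Sobolev inequality \eqref{definicaoS}). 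At $t=1$, since $I(\gamma(1))<0$ we have $\int_{\R^N}G(\gamma(1))\,dx > \tfrac12\int_{\R^N}|(-\Delta)^{\alpha/2}\gamma(1)|^2\,dx \ge \tfrac{N-2\alpha}{2N}\int_{\R^N}|(-\Delta)^{\alpha/2}\gamma(1)|^2\,dx$, so $Q(\gamma(1))<0$ and in particular $\gamma(1)\ne0$. By continuity of $t\mapsto Q(\gamma(t))$ there is $t^*\in(0,1)$ with $Q(\gamma(t^*))=0$; and $\gamma(t^*)\ne0$ (otherwise $Q=0$ trivially but then $I(\gamma(t^*))=0$; one excludes $0$ by noting that on the connected piece where $Q$ changes sign $\gamma$ stays away from $0$, or simply by choosing $t^*$ to be the last zero before $t=1$, where $\gamma\ne0$ by the endpoint analysis). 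Hence $\gamma(t^*)\in\mathcal P$, so $\max_{t\in[0,1]}I(\gamma(t))\ge I(\gamma(t^*))\ge p$. Taking the infimum over $\gamma\in\Gamma$ gives $b\ge p$, and substituting the formula \eqref{eq:valordep} for $p$ yields exactly the claimed inequality.

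The main obstacle is the intersection claim $\gamma([0,1])\cap\mathcal P\ne\emptyset$ together with the nontriviality of the crossing point: one must argue carefully that $Q(\gamma(t))$ is positive for $t$ small — which requires the fine inequality $\int G(u)\,dx < \tfrac{N-2\alpha}{2N}\int|(-\Delta)^{\alpha/2}u|^2\,dx$ for $u$ small but nonzero in $H^\alpha$, proved via \eqref{ESTIMATIVA2} and \eqref{definicaoS} — and negative at $t=1$, which uses the sign condition $I(\gamma(1))<0$. Everything else (the scaling identities, continuity of $Q\circ\gamma$, and plugging in \eqref{eq:valordep}) is routine.
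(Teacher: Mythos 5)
Your proof is correct and follows essentially the same route as the paper: every path in $\Gamma$ meets the Pohozaev manifold $\mathcal{P}$, hence $p\le\max_{[0,1]}I(\gamma(\cdot))$ for all $\gamma\in\Gamma$, so $p\le b$, and the formula \eqref{eq:valordep} for $p$ gives the stated bound. The only difference is that you prove the crossing claim directly (sign of $Q\circ\gamma$: nonnegative near $t=0$ via \eqref{ESTIMATIVA2} and \eqref{definicaoS}, strictly negative at $t=1$ since $I(\gamma(1))<0$, with the last-zero argument ensuring the crossing point is nontrivial), whereas the paper simply cites \cite[Lemma 2.3]{LilianeRquelSquassina}; the false start in your first paragraph is harmless and can be deleted.
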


\begin{proof}
 Indeed, from \cite[Lemma 2.3]{LilianeRquelSquassina}, for each $\overline \gamma \in \Gamma$ with 
$$\Gamma = \{\overline{\gamma} \in C\left([0, 1], H^\alpha(\R^N) \right): \overline{\gamma}(0) =0 \ \hbox{and} \ I(\overline{\gamma}(1)) < 0  \} $$ 
it results   $\overline{\gamma}([0, 1]) \cap \mathcal{P} \neq \emptyset$. Then, there exists $t_0 \in [0, 1]$ such that $\overline{\gamma}(t_0) \in \mathcal{P}$. So
$$ 
p \leq I(\overline{\gamma}(t_0)) \leq \max_{t \in [0, 1]} I(\overline{\gamma}(t))
$$
from where it follows that $p \leq  b$ and the result follows from \eqref{eq:valordep}.
 \end{proof}

The next result is standard. We recall the proof for the reader's convenience.
\begin{lemma}\label{Mmanif}
The set  $\mathcal{M}$ defined in \eqref{eq:M} is not empty and a $C^1$ manifold. 
\end{lemma}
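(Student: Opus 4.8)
The plan is to verify the two claims separately. For non-emptiness, I would exhibit an explicit element of $\mathcal M$ by a scaling argument. Pick any $u\in H^\alpha(\R^N)\setminus\{0\}$ with $u\geq0$; using \eqref{f_{4}} (which gives a polynomial lower bound on $f$, hence on $F$) together with \eqref{ESTIMATIVA2}, one checks that for a suitably chosen $u$ one has $\int_{\R^N}G(u)\,dx>0$ — here $G(u)=F(u)-u^2/2$ and the key point is that the term $\tau s^{q-1}$ in \eqref{f_{4}} with $q>2$ dominates the quadratic part on a set of positive measure once we dilate. Concretely, for $\lambda>0$ set $u_\lambda(x)=u(x/\lambda)$; then $\int_{\R^N}G(u_\lambda)\,dx=\lambda^N\int_{\R^N}G(u)\,dx$, so as soon as $\int_{\R^N}G(u)\,dx>0$ we may choose $\lambda$ so that $\int_{\R^N}G(u_\lambda)\,dx=1$, giving $u_\lambda\in\mathcal M$. (If one prefers not to assume $\int G(u)\,dx>0$ for the starting $u$, one first builds such a $u$ from \eqref{f_{4}}: take $u$ to be, say, a smooth compactly supported bump of large height on a large set, so that the region where $u$ is large enough for $\tau s^{q-1}\cdot s - \frac12 s^2>0$ has large measure.)

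For the manifold structure, I would apply the standard regular-value / implicit function theorem argument to $J(u)=\int_{\R^N}G(u)\,dx-1$, which is $C^1$ on $H^\alpha(\R^N)$ under \eqref{f_{1}}--\eqref{f_{2}} (the same estimates \eqref{ESTIMATIVA1}--\eqref{ESTIMATIVA2} that make $I$ of class $C^1$ apply to $J$, since $g(s)=f(s)-s$ satisfies analogous growth bounds). Then $\mathcal M=J^{-1}(0)$, and it suffices to show $0$ is a regular value of $J$ restricted to $H^\alpha(\R^N)\setminus\{0\}$, i.e. $J'(u)\neq0$ for every $u\in\mathcal M$. This is exactly where \eqref{eq:positividade} is used: for $u\in\mathcal M$ one has $J'(u)[u]=\int_{\R^N}(f(u)u-u^2)\,dx\geq2>0$, so $J'(u)$ is a nonzero functional. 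Hence $\mathcal M$ is a $C^1$ Banach submanifold of $H^\alpha(\R^N)$ of codimension one, with tangent space at $u$ given by $\ker J'(u)$.

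The only mild obstacle is the very first step — producing a function with $\int_{\R^N}G(u)\,dx>0$ — but this is forced by \eqref{f_{4}}: since $f(s)\geq\tau s^{q-1}$ for $s\geq0$ with $q>2$, we get $G(s)=F(s)-\frac{s^2}{2}\geq\frac{\tau}{q}s^{q}-\frac{s^2}{2}>0$ for all $s>s_0:=(q/(2\tau))^{1/(q-2)}$, so choosing $u$ equal to a constant $c>s_0$ on a ball of radius $R$ and tapering off, one makes $\int_{\R^N}G(u)\,dx$ positive by taking $R$ large (the positive contribution from the ball grows like $R^N$ while the negative contribution from the transition layer can be controlled). Everything else is routine: the regularity of $J$ is the same computation already invoked for $I$, and the regular-value condition is handed to us by \eqref{eq:positividade}.
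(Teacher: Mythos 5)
Your proof is correct. The manifold part is exactly the paper's argument: $J$ is $C^1$ by the same growth estimates used for $I$, and \eqref{eq:positividade} (which rests on \eqref{f_{3}} plus the constraint) gives $J'(u)[u]\geq 2$ on $\mathcal M$, so $0$ is a regular value and $\mathcal M=J^{-1}(0)$ is a $C^1$ submanifold of codimension one. For non-emptiness you take a slightly different route: the paper fixes a nonnegative bump $\varphi$ and studies $h(t)=\int_{\R^N}G(t\varphi)\,dx$, which is negative for small $t$ by \eqref{f_{1}} and, thanks to \eqref{f_{4}}, tends to $+\infty$ (indeed $h(t)\geq \tau t^q q^{-1}\int\varphi^q-\tfrac{t^2}{2}\int\varphi^2$), so the intermediate value theorem yields $\bar t$ with $h(\bar t)=1$; you instead build, \`a la Berestycki--Lions, a plateau function $u$ with $\int_{\R^N}G(u)\,dx>0$ (using $G(s)>0$ for $s>s_0$, positive bulk of order $R^N$ versus a transition layer of order $R^{N-1}$) and then normalize by the spatial dilation $u_\lambda(x)=u(x/\lambda)$, using $\int G(u_\lambda)=\lambda^N\int G(u)$ to hit the value $1$ exactly. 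Both normalizations rely on the same structural facts ($G>0$ somewhere, forced by \eqref{f_{4}}); the paper's amplitude-scaling plus IVT is marginally shorter, while your dilation trick avoids any continuity/IVT discussion and makes explicit that only $\int G(u)>0$ for some $u$ is needed, which is the more robust formulation. No gaps.
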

\begin{proof}
Observe that, fixed  $0\not\equiv\varphi\in C^{\infty}_{0}(\mathbb R^{N}), \varphi\geq0$ the function
$h(t)=\int_{\mathbb R^{N}}G(t\varphi)dx $ is strictly negative for small $t$ and $h'(t)>0$
for $t$ large; this implies that there exists some $\bar t>0$ such that $\bar t\varphi\in \mathcal M.$
Moreover $\mathcal M$ is a $C^{1}$ manifold in virtue of \eqref{eq:positividade}.
\end{proof}

The next steps consists in proving the boundedness of the minimizing sequences in $H^{\alpha}(\mathbb{R}^N)$ for the problem
\begin{equation}  \label{MINIMIZA1}
\min \left\{\frac{1}{2}\int_{\mathbb{R}^N}|(- \Delta u)^{\frac{\alpha}{2}} \,dx \,:\, \int_{\mathbb{R}^N}G(u)\,dx=1  \right\}. 
\end{equation}

\begin{lemma}\label{bdedminseq}
	Any minimizing sequence $\{ u_n \}\subset \mathcal M$ for $T$  is bounded in $H^\alpha(\R^N)$.
\end{lemma}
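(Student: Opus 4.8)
The plan is as follows. Since $\{u_n\}$ is a minimizing sequence for $T$ on $\mathcal M$, the quantity $\int_{\R^N}|(-\Delta)^{\alpha/2}u_n|^2\,dx = 2T(u_n)$ converges to $2D$ and is in particular bounded; so the only thing left to prove is that $|u_n|_2$ stays bounded. To this end I would exploit the membership $u_n\in\mathcal M$, which reads $\int_{\R^N}G(u_n)\,dx=1$, i.e., recalling $G(u)=F(u)-u^2/2$,
$$
\frac12\,|u_n|_2^2 = \int_{\R^N}F(u_n)\,dx - 1 .
$$
Thus boundedness of $|u_n|_2^2$ is equivalent to boundedness of $\int_{\R^N}F(u_n)\,dx$ (all these integrals are well defined and the last one is nonnegative, since $f\equiv0$ on $(-\infty,0]$ and $f\ge0$ on $[0,\infty)$ by \eqref{f_{4}}, whence $F\ge0$).

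Next I would estimate this integral via the growth bound \eqref{ESTIMATIVA2}: for any $\varepsilon>0$,
$$
\int_{\R^N}F(u_n)\,dx \le \frac{\varepsilon}{2}\,|u_n|_2^2 + C_\varepsilon\,|u_n|_{2^{*}_{\alpha}}^{2^{*}_{\alpha}} .
$$
The critical term on the right is controlled uniformly in $n$: since $N\ge 2>2\alpha$ we have $H^\alpha(\R^N)\subset\mathcal D^{\alpha,2}(\R^N)$, so the Sobolev inequality \eqref{definicaoS} gives $S\,|u_n|_{2^{*}_{\alpha}}^{2}\le \int_{\R^N}|(-\Delta)^{\alpha/2}u_n|^2\,dx$, and the right-hand side is bounded by the first observation. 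Hence $|u_n|_{2^{*}_{\alpha}}^{2^{*}_{\alpha}}\le C$ for a constant $C$ independent of $n$.

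Combining the two displays, $\tfrac12|u_n|_2^2\le\tfrac{\varepsilon}{2}|u_n|_2^2 + C_\varepsilon C - 1$; choosing $\varepsilon=1/2$ (legitimate, $\varepsilon$ being arbitrary and $C_\varepsilon$ then a fixed number) lets me absorb the $|u_n|_2^2$ term into the left-hand side and conclude $\tfrac14|u_n|_2^2\le C_{1/2}C$, so $|u_n|_2$ is bounded. Together with the bound on $\int_{\R^N}|(-\Delta)^{\alpha/2}u_n|^2\,dx$ this gives boundedness of $\|u_n\|$ in $H^\alpha(\R^N)$. The argument is essentially routine; the only point needing a little attention is ensuring the critical term $|u_n|_{2^{*}_{\alpha}}^{2^{*}_{\alpha}}$ does not blow up, which is precisely where the embedding into $\mathcal D^{\alpha,2}(\R^N)$ together with the a priori control of the Gagliardo seminorm along the minimizing sequence enters.
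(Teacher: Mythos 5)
Your proposal is correct and follows essentially the same route as the paper: bound the Gagliardo seminorm from $T(u_n)\to D$, use the constraint $\int_{\R^N}G(u_n)\,dx=1$ to reduce everything to bounding $\int_{\R^N}F(u_n)\,dx$, estimate $F$ by \eqref{ESTIMATIVA2}, absorb the small $\varepsilon|u_n|_2^2$ term, and control the critical term via the Sobolev inequality \eqref{definicaoS} together with the seminorm bound. The only differences are cosmetic (the paper fixes $\varepsilon=1/4$ at the level of \eqref{ESTIMATIVA1} rather than $\varepsilon=1/2$ in \eqref{ESTIMATIVA2}), so there is nothing to add.
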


\begin{proof}
	Let $\{ u_n \}\subset \mathcal M  $ be a minimizing sequence for $T$, then
	$$T(u_{n}) =\frac 12 \int_{\R^N}  |(-\Delta)^{\alpha/2}u_n|^2  dx \longrightarrow D \quad \hbox{as $n \rightarrow + \infty$} $$ 
	and 
	$$ 
	\int_{\R^N} G(u_n) dx = 1, \quad \hbox{that is}, \quad  \int_{\R^N} \left( F(u_n)   -  \frac 12 u_n^2 \right) dx =1. 
	$$
	Then
	\begin{equation}\label{boundgradA}
	\frac 12\int_{\R^N}   |(-\Delta)^{\alpha/2}u_{n}|^2 \, dx \leq C \quad \hbox{for all $n \in \N$ and for some constant $C > 0$ } 
	\end{equation}
	and  
	$$\int_{\R^N}  F(u_n)  \,dx  = 1 +\frac 12 \int_{\R^N}   u_n ^2  \, dx.$$
	By using (\ref{ESTIMATIVA1}) with $\varepsilon =  1/4$, we get
	$$
	1 +\frac 12 \int_{\R^N}   u_n^2  \, dx \leq  \frac 14 \int_{\R^N}   u_n ^2  \, dx + C_{1/4} \int_{\R^N}   |u_n|^{2^{*}_{\alpha}}  \, dx. 
	$$ 
	Then, for every $n \in \N$, by using   \eqref{boundgradA}, it follows
	$$\frac 12 \int_{\R^N}   u_n^2   dx \leq  C_{1/4} \, \int_{\R^N}   |u_n|^{2^{*}_{\alpha}}  dx 
	\leq C_{1/4}   C  \int_{\R^N}   |(-\Delta)^{\alpha/2}u_{n}|^2   \, dx \leq \overline C. $$ 
	Consequently $\{ u_n \}  $ is bounded also in $L^2(\R^N)$ and this ensures its boundedness in $H^\alpha(\R^N)$.
\end{proof}

By the Ekeland Variational Principle 
we can assume that the minimizing sequence $\{u_{n}\}$ is also a {\sl Palais-Smale sequence}, that is,  there exists a sequence of Lagrange multipliers 
$\{ \lambda_n \} \subset \mathbb R$ such that 
\begin{equation}\label{eq:minimizing}
\frac 12  \int_{\R^N}  |(-\Delta)^{\alpha/2}u_{n}|^2    \, dx  \longrightarrow  D \qquad \hbox{as $n \rightarrow + \infty$} 
\end{equation}
and 
\begin{equation} 
\label{LAMBDA1}
T'(u_n) - \lambda_n J'(u_n) \longrightarrow 0 \  \hbox{in  } (H^\alpha(\R^N))^{-1}\qquad \text{ as } n \rightarrow + \infty.
\end{equation}
In the remaining part of this section,  $\{\lambda_{n}\}$ will be the associated sequence of Lagrange multipliers. At this point it is useful to establish some properties of the levels $D$ and $b$.

\begin{lemma}\label{Dposit}
The number $D$ given by \eqref{eq:D} is positive, namely, $ D > 0$.
\end{lemma}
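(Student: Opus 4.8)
The plan is to show $D>0$ by contradiction. Suppose $D=0$. Then there is a minimizing sequence $\{u_n\}\subset\mathcal M$ with
$$
\frac12\int_{\R^N}|(-\Delta)^{\alpha/2}u_n|^2\,dx\longrightarrow 0,
$$
while $\int_{\R^N}G(u_n)\,dx=1$ for every $n$. By Lemma \ref{bdedminseq} the sequence $\{u_n\}$ is bounded in $H^\alpha(\R^N)$, and in particular $\{u_n\}$ is bounded in $L^2(\R^N)$; since $|(-\Delta)^{\alpha/2}u_n|_2\to 0$, the full norm does \emph{not} go to zero in general, so one must extract information from the vanishing of the gradient term together with the normalization $\int G(u_n)=1$. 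The key is to combine the fractional Gagliardo--Nirenberg--Sobolev inequality \eqref{definicaoS} with the estimate \eqref{ESTIMATIVA2}.

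\textbf{Key steps.} First I would rewrite the normalization as
$$
1=\int_{\R^N}G(u_n)\,dx=\int_{\R^N}\Big(F(u_n)-\frac12 u_n^2\Big)\,dx\leq \int_{\R^N} F(u_n)\,dx.
$$
Next, apply \eqref{ESTIMATIVA2} with a fixed small $\varepsilon$, say $\varepsilon=1$, to obtain
$$
1\leq \frac12\int_{\R^N} u_n^2\,dx+C_1\int_{\R^N}|u_n|^{2^*_\alpha}\,dx-\frac12\int_{\R^N}u_n^2\,dx
=\;\text{(need to be careful here)}.
$$
Actually the cleaner route: from $\int G(u_n)=1$ and \eqref{ESTIMATIVA2},
$$
1+\frac12\int_{\R^N}u_n^2\,dx=\int_{\R^N}F(u_n)\,dx\leq \frac{\varepsilon}{2}\int_{\R^N}u_n^2\,dx+C_\varepsilon\int_{\R^N}|u_n|^{2^*_\alpha}\,dx,
$$
so choosing $\varepsilon=1$ gives $1\leq C_1\int_{\R^N}|u_n|^{2^*_\alpha}\,dx$, i.e. the $L^{2^*_\alpha}$-norms stay bounded away from zero. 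On the other hand, since $\{u_n\}$ is bounded in $L^2$ and in $\mathcal D^{\alpha,2}(\R^N)$, the Sobolev inequality \eqref{definicaoS} yields
$$
\int_{\R^N}|u_n|^{2^*_\alpha}\,dx\leq S^{-2^*_\alpha/2}\Big(\int_{\R^N}|(-\Delta)^{\alpha/2}u_n|^2\,dx\Big)^{2^*_\alpha/2}\longrightarrow 0,
$$
because the bracketed quantity tends to $2D=0$. This contradicts the lower bound $\int|u_n|^{2^*_\alpha}\,dx\geq 1/C_1>0$, and therefore $D>0$.

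\textbf{Main obstacle.} The only delicate point is that $|(-\Delta)^{\alpha/2}u_n|_2\to 0$ alone does not control $|u_n|_{2^*_\alpha}$ unless one first knows $u_n\in\mathcal D^{\alpha,2}(\R^N)$ and that \eqref{definicaoS} is applicable; here this is guaranteed because $H^\alpha(\R^N)\hookrightarrow L^{2^*_\alpha}(\R^N)$ for $N\geq 2$ and $\{u_n\}$ is $H^\alpha$-bounded, so each $u_n$ lies in $\mathcal D^{\alpha,2}(\R^N)$ and \eqref{definicaoS} holds. With that in hand the argument is a short two-sided estimate on $\int|u_n|^{2^*_\alpha}\,dx$, one bound forcing it to stay positive, the other forcing it to vanish — the contradiction giving $D>0$.
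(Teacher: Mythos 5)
Your proposal is correct and follows essentially the same argument as the paper: use the constraint $\int_{\R^N}G(u_n)\,dx=1$ together with the estimate \eqref{ESTIMATIVA2} to bound $1$ (plus a nonnegative $L^2$ term) by $C_\varepsilon\int|u_n|^{2^*_\alpha}\,dx$, and then let the Sobolev inequality \eqref{definicaoS} force that quantity to zero along the minimizing sequence, a contradiction. The only cosmetic difference is your choice $\varepsilon=1$ (the paper takes $\varepsilon=1/2$ and uses boundedness of the gradient term to linearize the Sobolev bound), which changes nothing essential.
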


\begin{proof}
Clearly by definition $D \geq 0$. Suppose, by contradiction, that $D =0$.  If  $\{ u_n \}  $ is a minimizing sequence for $D =0$, then 
$$\frac 12 \int_{\R^N}  |(-\Delta)^{\alpha/2}u_n|^2   dx \to 0 \quad \hbox{as} \quad n \to + \infty 
$$ 
and 
$$ 1 = \int_{\R^N} G(u_n) \, dx =  \int_{\R^N} \left( F(u_n)   - \frac 12 u_n^2 \right)  dx. $$ 
Then, for any $\varepsilon>0$, see (\ref{ESTIMATIVA2}), 
$$1 +  \frac 12\int_{\R^N}  u_n^2  \, dx =  \int_{\R^N} F(u_n)  \, dx \leq \frac{\varepsilon}{2} \int_{\R^N}  u_n^2  \, dx  
+ \frac{C_{\varepsilon}}{2^{*}_{\alpha}} \int_{\R^N}  |u_n|^{2^{*}_{\alpha}}  \, dx  $$
so that
$$ 1+ \frac 12(1 - \varepsilon) \int_{\R^N}  u_n^2  \, dx  \leq C_{\varepsilon} \int_{\R^N}  |u_n|^{2^{*}_{\alpha}}  \, dx \leq C_{\varepsilon} C \int_{\R^N} |(-\Delta)^{\alpha/2}u_{n}|^2  \, dx.
$$
By choosing $\varepsilon = 1/2$, we obtain 
$$
1 \leq C_{ 1/2} C \int_{\R^N}  |(-\Delta)^{\alpha/2}u_n|^2  \, dx \longrightarrow 0 \qquad  \hbox{ as $n \longrightarrow + \infty$}.
$$ 
This contradiction concludes the proof.
\end{proof}

\begin{lemma}\label{lambdanD}
The sequence of Lagrange multipliers $\{ \lambda_n \}$
associated to the minimizing sequence $\{u_{n}\}$  
is bounded. More precisely, we have that  
$$
0<\liminf_{n \to +\infty}\lambda_n \leq \limsup_{n \to + \infty} \lambda_n \leq D.
$$ 
Hence, for some subsequence, still denoted by $\{\lambda_n\}$, we can assume that $\lambda_n \to \lambda^{*}$, for some $\lambda^{*} \in (0, D]$.

\end{lemma}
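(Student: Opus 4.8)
The plan is to use the Palais--Smale condition \eqref{LAMBDA1} together with the normalization \eqref{eq:minimizing} and the positivity estimate \eqref{eq:positividade} to pin down the Lagrange multipliers. First I would test \eqref{LAMBDA1} against $u_{n}$ itself: since $\{u_{n}\}$ is bounded in $H^{\alpha}(\mathbb R^{N})$ by Lemma \ref{bdedminseq}, the quantity $\bigl(T'(u_{n})-\lambda_{n}J'(u_{n})\bigr)[u_{n}]$ tends to $0$. Now $T'(u_{n})[u_{n}] = \int_{\mathbb R^{N}}|(-\Delta)^{\alpha/2}u_{n}|^{2}\,dx = 2T(u_{n}) \to 2D$ by \eqref{eq:minimizing}, while $J'(u_{n})[u_{n}] = \int_{\mathbb R^{N}}(f(u_{n})u_{n}-u_{n}^{2})\,dx \geq 2$ for all $n$ by \eqref{eq:positividade}, since $u_{n}\in\mathcal M$. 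Hence
\[
o(1) = 2T(u_{n}) - \lambda_{n}J'(u_{n})[u_{n}],
\]
which forces $\lambda_{n}J'(u_{n})[u_{n}] = 2T(u_{n}) + o(1) \to 2D$. Since $D>0$ by Lemma \ref{Dposit}, the right-hand side is eventually positive and bounded; combined with $J'(u_{n})[u_{n}]\geq 2>0$ this already shows $\{\lambda_{n}\}$ is bounded and that $\liminf_{n}\lambda_{n}\geq 0$; I will have to rule out $\liminf_{n}\lambda_{n}=0$ separately.

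For the upper bound $\limsup_{n}\lambda_{n}\leq D$, I would use $J'(u_{n})[u_{n}]\geq 2$ again: from $\lambda_{n}J'(u_{n})[u_{n}] = 2T(u_{n}) + o(1)$ and $\lambda_{n}\geq 0$ eventually, divide to get
\[
\lambda_{n} = \frac{2T(u_{n})+o(1)}{J'(u_{n})[u_{n})]} \leq \frac{2T(u_{n})+o(1)}{2} = T(u_{n}) + o(1) \longrightarrow D,
\]
so $\limsup_{n}\lambda_{n}\leq D$. For the strict lower bound I would argue by contradiction: suppose $\lambda_{n}\to 0$ along a subsequence. Then, using \eqref{LAMBDA1} tested against $u_{n}$ once more, $2T(u_{n}) = \lambda_{n}J'(u_{n})[u_{n}] + o(1) \to 0$, because $J'(u_{n})[u_{n}]$ is bounded above (it is controlled by $\|u_{n}\|$ and the growth estimate \eqref{ESTIMATIVA1}, which gives $\int f(u_{n})u_{n} \leq \varepsilon|u_{n}|_{2}^{2} + C_{\varepsilon}|u_{n}|_{2^{*}_{\alpha}}^{2^{*}_{\alpha}} \leq C'$ by boundedness in $H^{\alpha}$). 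Thus $T(u_{n})\to 0$, i.e.\ $D=0$, contradicting Lemma \ref{Dposit}. Therefore $\liminf_{n}\lambda_{n}>0$, and passing to a subsequence $\lambda_{n}\to\lambda^{*}$ with $\lambda^{*}\in(0,D]$.

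The main obstacle, and the step to be most careful about, is establishing that $J'(u_{n})[u_{n}]$ stays bounded above — this is what makes the contradiction argument for the strict positivity of $\liminf\lambda_{n}$ work, and it is also implicitly needed to conclude that $\lambda_{n}$ cannot blow up. This boundedness is not entirely free: it relies on $\int_{\mathbb R^{N}} f(u_{n})u_{n}\,dx$ being bounded, which follows from \eqref{ESTIMATIVA1}, the $L^{2}$-boundedness of $\{u_{n}\}$, and the fractional Sobolev embedding $H^{\alpha}(\mathbb R^{N})\hookrightarrow L^{2^{*}_{\alpha}}(\mathbb R^{N})$ together with \eqref{boundgradA}; I would spell this estimate out explicitly. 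Everything else is a direct manipulation of the identity $2T(u_{n}) - \lambda_{n}J'(u_{n})[u_{n}] = o(1)$ obtained by testing the Palais--Smale condition against the bounded sequence $\{u_{n}\}$.
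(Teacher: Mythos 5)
Your proposal is correct and follows essentially the same route as the paper: test the Palais--Smale relation \eqref{LAMBDA1} on $u_n$ to get $2T(u_n)=\lambda_n J'(u_n)[u_n]+o_n(1)$, use $J'(u_n)[u_n]\geq 2$ from \eqref{eq:positividade} together with $T(u_n)\to D$ for the bound $\limsup_n\lambda_n\leq D$, and use the boundedness of $J'(u_n)[u_n]$ (from \eqref{ESTIMATIVA1} and the $H^\alpha$-boundedness of $\{u_n\}$) plus $2T(u_n)\to 2D>0$ and Lemma \ref{Dposit} to conclude $\liminf_n\lambda_n>0$. The only difference is cosmetic: you spell out the estimate $|J'(u_n)[u_n]|\leq C$ that the paper dismisses as easy, which is a reasonable thing to make explicit.
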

\begin{proof}
By \eqref{LAMBDA1}, 
\begin{equation}\label{eq:T'J'}
2T(u_{n}) =T'(u_n)[u_n] = \lambda_n J'(u_n)[u_n]  +o_{n}(1).
\end{equation}
Then, from \eqref{eq:positividade}
$$
2T(u_n) \geq 2\lambda_{n} +o_{n}(1) 
$$
which implies, taking into account \eqref{eq:minimizing},
$$ 
\limsup_{n \rightarrow + \infty} \lambda_n \leq \frac 12 \limsup_{n \rightarrow + \infty} \int_{\R^N}  |(-\Delta)^{\alpha/2}u_{n}|^2 dx =D.  
$$ 
Since $\{u_{n}\}$ is a bounded minimizing sequence, it is easy to see that 
$|J'(u_{n})[u_{n}] |= |\int_{\mathbb R^{N}} g(u_{n}) u_{n}| \leq C$, and then
by \eqref{eq:T'J'} and the fact that $2T(u_{n})\to 2D>0$, we infer that 
$$
\liminf_{n \to +\infty}\lambda_n >0.
$$
The proof is thereby completed.
\end{proof}

In the sequel, we will show that  a minimizing sequence  for $(\ref{MINIMIZA1})$ can be choose nonnegative and radially symmetric around the origin. 
Note that for our proof we do not need  to consider the ``odd extension'' of the nonlinearity,
as it is usually done in the literature to show that the minimizing 
sequence can be replaced by the sequence of the absolute values.
 In fact we will prove that 
the minimizing sequence  can be replaced, roughly speaking, with the sequence of the positive parts.

\begin{lemma} \label{positividade}  Any  minimizing sequence $\{u_n\}$  for $(\ref{MINIMIZA1})$
can be assumed
radially symmetric around the origin  and nonnegative.
\end{lemma}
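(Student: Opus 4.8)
The plan is to exploit the symmetric decreasing rearrangement (Schwarz symmetrization) together with the fact that the fractional Dirichlet energy does not increase under this operation. First I would recall the two facts I need as black boxes: (i) the Pólya–Szegő inequality for the fractional Gagliardo seminorm (equivalently for $|(-\Delta)^{\alpha/2}\cdot|_2$), namely that if $u^\star$ denotes the symmetric decreasing rearrangement of $|u|$ then $|(-\Delta)^{\alpha/2}u^\star|_2 \leq |(-\Delta)^{\alpha/2}u|_2$; and (ii) the equimeasurability of rearrangements, which gives $\int_{\R^N}\Phi(u^\star)\,dx = \int_{\R^N}\Phi(|u|)\,dx$ for any Borel $\Phi$ with $\Phi(0)=0$. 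These are classical and can be cited (e.g. from \cite{DPV} or the standard rearrangement literature).

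The argument then runs as follows. Let $\{u_n\}\subset\mathcal M$ be a minimizing sequence for $T$. Since we have set $f(s)=0$ for $s\leq 0$, we have $F(s)=0$ and hence $G(s)=-s^2/2\leq 0$ for $s\leq 0$; thus replacing $u_n$ by $|u_n|$ can only increase $\int_{\R^N}G(\cdot)\,dx$, because on the set $\{u_n<0\}$ the integrand $G(|u_n|)=F(u_n^-) - (u_n^-)^2/2$ could a priori be positive or negative — here I must be slightly careful. The clean route is: first observe $\int_{\R^N}G(u_n)\,dx=1>0$, so the ``positive part contributes enough''; more precisely, write $v_n=u_n^+$, and note $G(u_n)=G(v_n)$ a.e. on $\{u_n\geq0\}$ while $G(u_n)=-u_n^2/2\leq0$ on $\{u_n<0\}$, whereas $G(v_n)=0$ there. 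Hence $\int_{\R^N}G(v_n)\,dx \geq \int_{\R^N}G(u_n)\,dx = 1$, and also $|(-\Delta)^{\alpha/2}v_n|_2\leq|(-\Delta)^{\alpha/2}u_n|_2$ since $v_n=u_n^+$ and truncation does not increase the fractional seminorm (again Pólya–Szegő / the fact that $\|(a\vee0)-(b\vee0)\|\le\|a-b\|$ pointwise in the Gagliardo kernel). Then pass to the symmetric decreasing rearrangement $w_n=v_n^\star$: by equimeasurability $\int_{\R^N}G(w_n)\,dx=\int_{\R^N}G(v_n)\,dx\geq1$, and by fractional Pólya–Szegő $T(w_n)\leq T(v_n)\leq T(u_n)$.

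It remains to rescale $w_n$ back onto the constraint manifold $\mathcal M$. Since $\int_{\R^N}G(w_n)\,dx=:c_n\geq1$, I would use a dilation $\widetilde w_n(x)=w_n(x/\theta_n)$: one has $\int_{\R^N}G(\widetilde w_n)\,dx=\theta_n^N c_n$ and $T(\widetilde w_n)=\theta_n^{N-2\alpha}T(w_n)$, so choosing $\theta_n = c_n^{-1/N}\in(0,1]$ puts $\widetilde w_n\in\mathcal M$ and gives $T(\widetilde w_n)=c_n^{-(N-2\alpha)/N}T(w_n)\leq T(w_n)\leq T(u_n)$ (using $N>2\alpha$ and $c_n\geq1$; when $N=2\alpha$ is excluded here since $N\geq2,\alpha\in(0,1)$, and the borderline is handled trivially anyway). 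Therefore $\{\widetilde w_n\}\subset\mathcal M$ is a sequence of nonnegative, radially symmetric, radially decreasing functions with $T(\widetilde w_n)\leq T(u_n)\to D$, and since $T\geq D$ on $\mathcal M$ it is itself a minimizing sequence, completing the proof. The main obstacle I anticipate is purely bookkeeping: making rigorous that both the truncation $u\mapsto u^+$ and the rearrangement $u\mapsto u^\star$ genuinely do not increase the fractional energy $|(-\Delta)^{\alpha/2}u|_2$ — this is where one invokes the nonlocal Pólya–Szegő inequality and the pointwise contraction property of the positive-part map against the Gagliardo kernel $|x-y|^{-N-2\alpha}$; everything else is elementary scaling.
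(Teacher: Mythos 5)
Your proposal is correct and follows the same core strategy as the paper: pass to the positive part $u_n^+$ (which can only increase $\int_{\R^N}G(\cdot)\,dx$, since $G(s)=-s^2/2\leq 0$ for $s\leq 0$, and does not increase the fractional seminorm), then Schwarz-symmetrize, using equimeasurability for the constraint and the fractional P\'olya--Szeg\H{o} inequality for the energy. The one step you do differently is the renormalization back onto $\mathcal M$: you use the dilation $\widetilde w_n(x)=w_n(x/\theta_n)$ with $\theta_n=c_n^{-1/N}\in(0,1]$, exploiting the scaling $T(\widetilde w_n)=\theta_n^{N-2\alpha}T(w_n)\leq T(w_n)$ (valid because $N\geq 2>2\alpha$), whereas the paper rescales by a scalar, choosing $t_n\in(0,1]$ with $\int_{\R^N}G(t_nu_n^+)\,dx=1$ via the intermediate value theorem (the map $t\mapsto\int_{\R^N}G(tu_n^+)\,dx$ is negative near $0$ by \eqref{f_{1}} and $\geq 1$ at $t=1$), and then uses $T(t_nu_n^+)=t_n^2\,T(u_n^+)\leq T(u_n)$. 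Both mechanisms keep the energy from increasing: your dilation is explicit and bypasses \eqref{f_{1}} and the intermediate value argument, while the paper's scalar rescaling avoids the scaling-exponent bookkeeping; in either case the resulting sequence is nonnegative, radial, decreasing, lies in $\mathcal M$, and satisfies $D\leq T(\cdot)\leq T(u_n)\to D$, hence is minimizing.
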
 
\begin{proof}To begin with, we recall that $F(s)=0$ for all $s \leq 0$. Thus,
$F(u_n)=F(u_n^{+})$ for all $n \in \mathbb{N}$ with $u_n^{+}=\max\{0,u_n\}$.	
From this, the equality  
$$
\int_{\R^N}G(u_n)\,dx=1, \quad \forall n \in \mathbb{N}
$$	
leads to
$$
\int_{\R^N}G(u_n^+)\,dx \geq 1, \quad \forall n \in \mathbb{N}.
$$	
Defining the function $h_n:[0,1] \to \R$ by 
$$
h_n(t)=\int_{\R^N}G(t u_n^+)\,dx
$$	
the conditions on $f$ yield that $h$ is continuous with $h_n(1) \geq 1$. Once $u_n^+ \not=0$ for all $n \in  \mathbb{N}$, the condition \eqref{f_{1}} ensures that $h_n(t)<0$ for $t$ close to 0. Thus there is $t_n \in (0,1]$ such that $h_n(t_n)=1$, that is,
$$
\int_{\R^N}G(t_n u_n^+)\,dx = 1, \quad \forall n \in \mathbb{N},
$$	
implying that $t_n u_n^+ \in \mathcal{M}$. On the other hand, we also know that
$$
\int_{\R^N}|(-\Delta)^{\alpha/2}u_n^+|^{2}\,dx  \leq  \int_{\R^N}|(-\Delta)^{\alpha/2}u_n|^{2}\,dx.
$$
Once $t_n \in (0,1]$, the last inequality gives 
$$
D \leq T(t_n u_n^+)\leq T(u_n)=D+o_n(1)
$$
that is,
$$
t_n u_n^{+} \in \mathcal{M} \quad \mbox{and} \quad T(t_n u_n^+) \to D,
$$
showing that $\{t_n u_n^+\}$ is a minimizing sequence for $T$. Thereby, without lost of generality, we can assume that $\{u_n\}$ is a nonnegative sequence. 
	
Moreover, by noticing that
$$
	\int_{\R^N}  |(-\Delta)^{\alpha/2}u_n^{*}|^2 \, dx \leq \int_{\R^N}  |(-\Delta)^{\alpha/2}u_n|^2 \, dx, \quad \forall n \in \mathbb{N} 
	$$
	and
	$$
	\int_{\mathbb{R}^N}G(u_{n}^{*})\, dx= \int_{\mathbb{R}^N}G(u_{n})\, dx, \quad \forall n \in \mathbb{N}
	$$
	where $u_n^{*}$ is the Schwartz symmetrization of $u_n$, 
	any minimizing sequence can  be assumed  radially symmetric, non-negative and decreasing in $r=|x|$.
\end{proof}

In what follow, we will use that the embedding 
\begin{equation} \label{imersao}
H_{rad}^{\alpha}(\R^N) \hookrightarrow L^{p}(\R^N) 
\end{equation}
is compact for all $p \in (2,2_{\alpha}^{*})$, see Lions \cite{Lions} for more details.


\medskip

Due to the boundedness in $H^{\alpha}(\mathbb R^{N})$ of the (non-negative and   radial
symmetric) minimizing sequence $\{u_{n}\}$ (see Lemma \ref{bdedminseq})
we can assume that $\{u_{n}\}$ has a weak limit in $H^{\alpha}(\mathbb R^{N})$
denoted hereafter with $u$. Observe also that,  by the boundedness in $L^{2}(\mathbb R^{N})$  we have the uniform decay $|u_{n}(x)| \leq C |x|^{-N/2}$, see \cite[Lemma 1]{Am}. Therefore, passing to a subsequence, if necessary, we deduce that the weak limit $u$ is non-negative, radially symmetric  and decreasing.
 
It turns out that the weak limit $u$ is a  solution of the minimizing  problem \eqref{eq:D}
we were looking for. Before to see
 this some preliminary lemmas are in order to recover some compactness.

\begin{lemma}\label{nuSD}
Assume that $v_n:=u_n-u\rightharpoonup 0$ in $H^{\alpha}(\mathbb R^{N})$
and  $\displaystyle\int_{\R^N}  |(-\Delta)^{\alpha/2} v_n|^{2} \, dx \to L>0$. Then 
$$
D \geq 2^{-2 \alpha/N}S.
$$ 
\end{lemma}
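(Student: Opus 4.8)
The plan is to combine an elementary energy splitting, the Palais--Smale information \eqref{LAMBDA1}, a pointwise control of $f(s)s-s^{2}$ by the critical power, and the Sobolev inequality \eqref{definicaoS}. Passing to a subsequence I assume that $\int_{\R^{N}}|(-\Delta)^{\alpha/2}v_{n}|^{2}\,dx\to L$, that $B:=\lim_{n}\int_{\R^{N}}|v_{n}|^{2^{*}_{\alpha}}\,dx$ exists and is finite (since $\{v_{n}\}$ is bounded in $H^{\alpha}(\R^{N})$), that $\int_{\R^{N}}v_{n}^{2}\,dx$ converges, and that $\lambda_{n}\to\lambda^{*}\in(0,D]$ by Lemma \ref{lambdanD}. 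Since $v_{n}\rightharpoonup0$ in $H^{\alpha}(\R^{N})$, writing $u_{n}=u+v_{n}$ and expanding $\int|(-\Delta)^{\alpha/2}u_{n}|^{2}=\int|(-\Delta)^{\alpha/2}u|^{2}+\int|(-\Delta)^{\alpha/2}v_{n}|^{2}+2\int(-\Delta)^{\alpha/2}u\,(-\Delta)^{\alpha/2}v_{n}$, then letting $n\to\infty$ (the cross term vanishes by weak convergence, and $\int|(-\Delta)^{\alpha/2}u_{n}|^{2}\to2D$ by \eqref{eq:minimizing}), I obtain $2D=\int_{\R^{N}}|(-\Delta)^{\alpha/2}u|^{2}\,dx+L$; in particular $2D\ge L$.

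Next I would test \eqref{LAMBDA1} with $v_{n}$. On the one hand $T'(u_{n})[v_{n}]=\int(-\Delta)^{\alpha/2}u\,(-\Delta)^{\alpha/2}v_{n}+\int|(-\Delta)^{\alpha/2}v_{n}|^{2}\to L$; on the other hand $J'(u_{n})[v_{n}]=\int_{\R^{N}}f(u_{n})v_{n}\,dx-\int_{\R^{N}}u_{n}v_{n}\,dx$, where $\int u_{n}v_{n}=\int v_{n}^{2}+o_{n}(1)$ because $\int u v_{n}\to0$, and $\int f(u_{n})v_{n}=\int f(v_{n})v_{n}+o_{n}(1)$ by the Brezis--Lieb lemma applied to the nonlinear term. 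Since $\{v_{n}\}$ is bounded, $T'(u_{n})[v_{n}]-\lambda_{n}J'(u_{n})[v_{n}]\to0$, and since $\lambda_{n}\to\lambda^{*}>0$ with $J'(u_{n})[v_{n}]$ bounded, I conclude
\[
L=\lambda^{*}\,\lim_{n}\int_{\R^{N}}\bigl(f(v_{n})v_{n}-v_{n}^{2}\bigr)\,dx .
\]

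Then I would bound this limit by $\lambda^{*}B$. From \eqref{f_{1}} and \eqref{f_{2}}, for each $\varepsilon\in(0,1)$ there are $0<\delta_{\varepsilon}<M_{\varepsilon}$ and $C_{\varepsilon}>0$ with
\[
f(s)s-s^{2}\le(1+\varepsilon)\,|s|^{2^{*}_{\alpha}}+C_{\varepsilon}\,\chi_{\{\delta_{\varepsilon}\le|s|\le M_{\varepsilon}\}}(s)\qquad\text{for all }s\in\R ,
\]
because $f(s)s-s^{2}\le(\varepsilon-1)s^{2}<0$ for $0<s<\delta_{\varepsilon}$ by \eqref{f_{1}}, $f(s)s-s^{2}\le(1+\varepsilon)s^{2^{*}_{\alpha}}$ for $s\ge M_{\varepsilon}$ by \eqref{f_{2}}, $f$ is bounded on $[\delta_{\varepsilon},M_{\varepsilon}]$, and $f\equiv0$ on $(-\infty,0]$. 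Evaluating at $v_{n}$, integrating, and using that $|\{x:|v_{n}(x)|\ge\delta_{\varepsilon}\}|\le\delta_{\varepsilon}^{-q}\int_{\R^{N}}|v_{n}|^{q}\,dx\to0$ for a fixed $q\in(2,2^{*}_{\alpha})$ by the compact embedding \eqref{imersao} (recall $v_{n}$ is radial), I get $\limsup_{n}\int(f(v_{n})v_{n}-v_{n}^{2})\,dx\le(1+\varepsilon)B$; letting $\varepsilon\downarrow0$ and using the displayed identity, $L/\lambda^{*}\le B$.

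Finally, \eqref{definicaoS} applied to $v_{n}$ gives $S\,B^{2/2^{*}_{\alpha}}\le L$, hence $S\,(L/\lambda^{*})^{2/2^{*}_{\alpha}}\le L$. Since $1-2/2^{*}_{\alpha}=2\alpha/N$ and $(2/2^{*}_{\alpha})(N/2\alpha)=(N-2\alpha)/2\alpha$, this rearranges to $L\ge S^{N/2\alpha}(\lambda^{*})^{-(N-2\alpha)/2\alpha}$, and $\lambda^{*}\le D$ gives $L\ge S^{N/2\alpha}D^{-(N-2\alpha)/2\alpha}$. Combined with $2D\ge L$ this yields $2D^{N/2\alpha}\ge S^{N/2\alpha}$, i.e.\ $D\ge2^{-2\alpha/N}S$. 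The step I expect to be the main obstacle is the Brezis--Lieb splitting $\int_{\R^{N}}f(u_{n})v_{n}\,dx=\int_{\R^{N}}f(v_{n})v_{n}\,dx+o_{n}(1)$: it relies on $u_{n}\to u$ a.e., the growth bound \eqref{ESTIMATIVA1}, the boundedness of $\{u_{n}\}$ in $L^{2}(\R^{N})\cap L^{2^{*}_{\alpha}}(\R^{N})$ and a Vitali/tightness argument; the remaining estimates are routine bookkeeping of exponents.
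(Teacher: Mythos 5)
Your proposal is correct and follows essentially the same route as the paper: both reduce matters to the asymptotic identity $\int_{\R^N}|(-\Delta)^{\alpha/2}v_n|^2\,dx=\lambda_n\int_{\R^N}\bigl(f(v_n)v_n-v_n^2\bigr)dx+o_n(1)$ (you by testing \eqref{LAMBDA1} against $v_n$ together with a Brezis--Lieb splitting, the paper by splitting $T'(u_n)[u_n]-\lambda_nJ'(u_n)[u_n]$ via ``standard arguments'' and the limit equation for $u$), then control the nonlinear term through the critical growth bound, the compact radial embedding into $L^q$ and the Sobolev constant $S$, and conclude from $\lambda^*\le D$ and $L\le 2D$ exactly as in the paper. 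The nonlinear splitting you single out as the delicate point is precisely the step the paper itself leaves to ``standard arguments'', so your write-up is at the same level of rigor.
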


\begin{proof}
First of all, we recall the limit $T'(u_n) - \lambda_n J'(u_n) \rightarrow 0$ as $n \rightarrow + \infty$ gives
$$
T'(u_n)[u_n]-\lambda_{n} J'(u_n)[u_n]=o_n(1).
$$
Using standard arguments, it is possible to prove that   
$$
T'(u_n)[u_n]-\lambda J'(u_n)[u_n]=T'(v_n)[v_n]-\lambda_{n} J'(v_n)[v_n]+T'(u)[u]-\lambda^{*} J'(u)[u] + o_n(1)
$$
and
$$
T'(u)-\lambda^{*} J'(u)=0 \quad \mbox{in} \quad (H^{\alpha}(\mathbb{R}^N))^{-1}.
$$
Then  $T'(v_n)[v_n]-\lambda_{n} J'(v_n)[v_n]=o_n(1)$, or equivalently,
$$ 
\int_{\R^N}  |(-\Delta)^{\alpha/2} v_n|^{2} \, dx   = 
\lambda_n \,  \int_{\R^N}   f(v_n) v_n \, dx - \lambda_n  \int_{\R^N}  v_n^2 \, dx    +  o_n(1).
$$ 
Using the growth  conditions  on $f$, fixed $q \in (2,2^{*}_{\alpha})$ and given $\varepsilon >0$, there exists $C=C(\varepsilon, q)>0$ such that 
$$
f(t)t \leq \varepsilon t^{2}+C|t|^{q}+(1+\varepsilon)|t|^{2^{*}_\alpha}, \quad \forall t \in \mathbb{R}.
$$
From this,
$$
\int_{\R^N} |(-\Delta)^{\alpha/2}v_n|^2 \, dx \leq  
\lambda_n \left(\varepsilon \int_{\R^N}   v_n^2\, dx  + C  \int_{\R^N} |v_n|^q dx    +(1+\varepsilon) \int_{\R^N}   |v_n|^{2^{*}_{\alpha}} \, dx\right) + o_n(1).
$$
Now, using the definition of $S$, see (\ref{definicaoS}),  we get
\begin{multline}\label{eq:LCS}
\int_{\R^N} |(-\Delta)^{\alpha/2}v_n|^2\,  dx \\ \leq  
\lambda_n \left(\varepsilon \int_{\R^N}   v_n^2 dx  + C  \int_{\R^N} |v_n|^q dx    +(1+\varepsilon) \left( \frac{1}{S} \int_{\R^N}  |(-\Delta)^{\alpha/2} v_n|^{2} \, dx\right)^{2^{*}_{\alpha}/ 2} \right)+ o_n(1).
\end{multline}
Passing to the limit in \eqref{eq:LCS}, 
recalling that $\{v_n\}$ is bounded, 
\begin{equation}\label{eq:L}
\displaystyle\int_{\R^N} |(-\Delta)^{\alpha/2}v_n|^2 \, dx \longrightarrow L
\end{equation}
and that $u_n \to 0$ in $L^{q}(\R^N)$ (see (\ref{imersao})),
we find
$$
L\leq \lambda^*\left( \varepsilon C_1 + (1+\varepsilon)\left(\frac{L}{S}\right)^{2^{*}_{\alpha}/2} \right).
$$
By the arbitrariety of  $\varepsilon$, we derive
$L\leq D \left(L/S \right)^{2^{*}_{\alpha}/2}$, or equivalently, 
\begin{equation}\label{eq:SDL}
S^{2^{*}_{\alpha}/2}\leq  D L^{2 \alpha /(N-2 \alpha)}.
\end{equation}
On the other hand \eqref{eq:L} implies that 
$L = 2D -\displaystyle\int_{\R^N} |(-\Delta)^{\alpha/2}u|^2 dx \leq 2D$. Hence \eqref{eq:SDL} becomes
$$
S^{2^{*}_{\alpha}/2}\leq  2^{2 \alpha /(N-2 \alpha)} D^{2^{*}_{\alpha}/2},
\quad \text{ i.e. } \quad D \geq 2^{-2 \alpha/N}S
$$
and the proof is finished.
\end{proof}

In the next result the condition $\tau>\tau^{*}$ given in \eqref{f_{4}} plays a crucial role.
\begin{lemma}\label{lambdabbound}
It holds
$$
b <  \frac {\alpha}{ N} \left(\frac{N-2\alpha}{2N} \right)^{(N-2\alpha)/2\alpha} 2^{(N-2\alpha)/2\alpha} S^{N/2\alpha}.
$$ 
 
\end{lemma}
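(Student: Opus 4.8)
The plan is to bound the mountain--pass level $b$ from above by testing the min--max formula \eqref{eq:b} along the half--lines $t\mapsto tw$ with $w\ge0$, optimising the choice of $w$ against the embedding constant $C_q$; the condition $\tau>\tau^*$ in \eqref{f_{4}} is then precisely what pushes the bound below the stated critical threshold. Note that, although \eqref{eq:P} has critical growth, assumption \eqref{f_{2}} gives only an \emph{upper} bound on $f$ near infinity, so there is no critical term available from below and no Brezis--Nirenberg--type bubble is used: the whole estimate is driven by the subcritical lower bound coming from \eqref{f_{4}}.

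Fix $w\in H^\alpha(\R^N)$ with $w\ge0$, $w\not\equiv0$. Integrating \eqref{f_{4}} gives $F(s)\ge\frac\tau q s^q$ for $s\ge0$, hence
\[
I(tw)=\frac{t^2}2\|w\|^2-\int_{\R^N}F(tw)\,dx\ \le\ \frac{t^2}2\|w\|^2-\frac{\tau t^q}{q}|w|_q^q=:\psi_w(t),\qquad t\ge0 .
\]
Since $q>2$ we have $\psi_w(t)\to-\infty$, so there is $T_0>0$ with $I(T_0w)<0$; then $t\in[0,1]\mapsto t\,T_0w$ belongs to $\Gamma$ and therefore $b\le\max_{t\ge0}I(tw)\le\max_{t\ge0}\psi_w(t)$. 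Maximising $\psi_w$ in one variable gives
\[
b\ \le\ \frac{q-2}{2q}\,\tau^{-2/(q-2)}\Big(\frac{\|w\|^2}{|w|_q^2}\Big)^{q/(q-2)} .
\]
Replacing $w$ by $|w|$ does not increase the quotient $\|w\|^2/|w|_q^2$ (because $\||w|\|\le\|w\|$), so $w$ may be allowed to range over all of $H^\alpha(\R^N)\setminus\{0\}$; taking the infimum and using the definition of the embedding constant ($C_q|u|_q^2\le\|u\|^2$ for all $u$) we obtain
\[
b\ \le\ \frac{q-2}{2q}\,\tau^{-2/(q-2)}\,C_q^{\,q/(q-2)} .
\]

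Finally \eqref{f_{4}} gives $\tau>\tau^*$, hence $\tau^{-2/(q-2)}<(\tau^*)^{-2/(q-2)}$, and it remains only to check the purely algebraic identity
\[
\frac{q-2}{2q}\,(\tau^*)^{-2/(q-2)}\,C_q^{\,q/(q-2)}=\frac{\alpha}{N}\Big(\frac{N-2\alpha}{2N}\Big)^{(N-2\alpha)/2\alpha}2^{(N-2\alpha)/2\alpha}S^{N/2\alpha},
\]
which follows immediately by raising the defining formula of $\tau^*$ to the power $2/(q-2)$, namely $(\tau^*)^{2/(q-2)}=\bigl[2^{(2\alpha-N)/2\alpha}S^{-N/2\alpha}\tfrac N\alpha\bigl(\tfrac{2N}{N-2\alpha}\bigr)^{(N-2\alpha)/2\alpha}\bigr]\cdot\tfrac{q-2}{2q}\cdot C_q^{q/(q-2)}$. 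Chaining the last three displays yields the desired strict inequality. No compactness argument is needed, since the min--max is only estimated from above; the sole real work is the bookkeeping of constants in this last step. (Alternatively one could run the same estimate through the rescaling curve $t\mapsto w(\cdot/t)$ together with the level $D$, using $I=T-\int_{\R^N}G$ and \eqref{eq:valordep}, but the half--line argument is the shortest route.)
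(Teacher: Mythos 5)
Your proof is correct and takes essentially the same route as the paper: bound $b$ by $\max_{t\ge 0} I(t\varphi)$ along a ray, use $F(s)\ge \frac{\tau}{q}s^q$ from \eqref{f_{4}}, maximize the resulting one-variable function, and convert the bound via the definition of $\tau^*$. The only cosmetic difference is that the paper tests directly with a fixed $\varphi$ satisfying $\|\varphi\|=1$ and $|\varphi|_q^2=C_q^{-1}$, whereas you optimize over all $w$ and identify the infimum of $\|w\|^2/|w|_q^2$ with $C_q$; your treatment of the sign of the test function and of the admissibility of the path is slightly more careful than the paper's, but the argument is the same.
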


\begin{proof}
Take $\varphi \in H^\alpha(\R^N)$ such that $\| \varphi \| =1$ and $| \varphi |_q^2 = C_q^{-1}$. 
From definition of $ b = \inf_{\gamma \in \Gamma} \max_{t \in [0, 1]} I(\gamma(t))$ and  \eqref{f_{4}}
\begin{eqnarray*}
b \leq \max_{ t \geq 0} I(t \varphi) & \leq & \max_{t \geq 0} \left\{ \frac{t^2}{2} - \tau \frac{t^q}{q}  \int_{\R^N}   |\varphi|^q \, dx  \right\} \\
&   = &  \max_{t \geq 0} \left\{ \frac{t^2}{2} - \tau \frac{t^q}{q}  C_q^{-q/2}  \right\} \\
&   = & \frac{q-2}{2q} \,  \frac{C_q^{q/(q-2)}}{\tau^{2/(q-2)}}.
\end{eqnarray*}
This gives (by the definition of  $\tau^{*}$)  exactly the conclusion.
\end{proof}

\begin{lemma}\label{weaklim}
If  $u_n \rightharpoonup u$ in $H^{\alpha}(\R)$, then  $u_n \to u$ in $\mathcal D^{\alpha,2}(\mathbb{R}^N)$. 
In particular,  $u_n \to u$  in  $L^{2^{*}_{\alpha}}(\mathbb{R}^N)$. 
\end{lemma}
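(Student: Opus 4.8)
The plan is to prove the stronger statement that the homogeneous seminorm of the difference vanishes: writing $v_n := u_n - u$, I will show that $\int_{\R^N} |(-\Delta)^{\alpha/2} v_n|^2 \, dx \to 0$. Since this seminorm is exactly the norm of $\mathcal{D}^{\alpha,2}(\R^N)$, and since $H^\alpha(\R^N) \hookrightarrow \mathcal{D}^{\alpha,2}(\R^N)$ for $N \ge 2$ (so that each $v_n$ lies in $\mathcal{D}^{\alpha,2}(\R^N)$), this is precisely the asserted convergence $u_n \to u$ in $\mathcal{D}^{\alpha,2}(\R^N)$; the convergence in $L^{2^{*}_{\alpha}}(\R^N)$ then follows immediately from the Sobolev inequality \eqref{definicaoS}, which yields $S\,|v_n|_{2^{*}_{\alpha}}^{2} \le \int_{\R^N} |(-\Delta)^{\alpha/2} v_n|^2 \, dx \to 0$.

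To see that the seminorm vanishes, I argue by contradiction along a subsequence. Because $\{u_n\}$ is bounded in $H^\alpha(\R^N)$ (Lemma \ref{bdedminseq}) and $u_n \rightharpoonup u$, we have $v_n \rightharpoonup 0$ in $H^\alpha(\R^N)$ and $\int_{\R^N}|(-\Delta)^{\alpha/2} v_n|^2\,dx$ is bounded; hence, up to a subsequence, $\int_{\R^N}|(-\Delta)^{\alpha/2} v_n|^2\,dx \to L$ for some $L \ge 0$. Suppose $L > 0$. Then Lemma \ref{nuSD} applies and gives $D \ge 2^{-2\alpha/N} S$.

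On the other hand, combining Lemma \ref{blowbound} with the value \eqref{eq:valordep} of $p$ and with Lemma \ref{lambdabbound} yields
$$
\frac{\alpha}{N}\Big(\frac{N-2\alpha}{2N}\Big)^{(N-2\alpha)/2\alpha}(2D)^{N/2\alpha} \le b < \frac{\alpha}{N}\Big(\frac{N-2\alpha}{2N}\Big)^{(N-2\alpha)/2\alpha}\,2^{(N-2\alpha)/2\alpha}\,S^{N/2\alpha},
$$
so that $(2D)^{N/2\alpha} < 2^{(N-2\alpha)/2\alpha} S^{N/2\alpha}$; raising to the power $2\alpha/N$ this reads $2D < 2^{(N-2\alpha)/N} S$, i.e. $D < 2^{-2\alpha/N} S$, contradicting the lower bound just obtained. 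Therefore $L = 0$. Since every subsequence of $\{\int_{\R^N}|(-\Delta)^{\alpha/2} v_n|^2\,dx\}$ admits a further subsequence converging to $0$, the whole sequence converges to $0$, which completes the argument.

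The only points needing care are the verification that Lemma \ref{nuSD} is applicable on the chosen subsequence — namely that $v_n \rightharpoonup 0$ in $H^\alpha(\R^N)$ and that the subsequential limit $L$ is strictly positive, both true by construction — together with the elementary exponent bookkeeping turning the mountain-pass estimate into $D < 2^{-2\alpha/N} S$. The substance of the argument, i.e. the incompatibility between the concentration lower bound of Lemma \ref{nuSD} and the strict upper bound of Lemma \ref{lambdabbound} (which is exactly where the hypothesis $\tau > \tau^{*}$ in \eqref{f_{4}} is used), is already contained in the preceding lemmas, so no real obstacle remains here.
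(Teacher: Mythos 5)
Your proposal is correct and follows essentially the same route as the paper: assume the homogeneous seminorm of $v_n=u_n-u$ does not vanish, pass to a subsequence with limit $L>0$, invoke Lemma \ref{nuSD} to get $D\geq 2^{-2\alpha/N}S$, and contradict the combination of Lemma \ref{blowbound} (with \eqref{eq:valordep}) and the strict bound of Lemma \ref{lambdabbound}. Your additional remarks (the subsequence bookkeeping for the full-sequence convergence and the Sobolev inequality step for $L^{2^{*}_{\alpha}}$) only make explicit what the paper leaves implicit.
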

\begin{proof}
Of course  $v_n=u_n -u \rightharpoonup 0$ in $H^{\alpha}(\R)$.
Suppose by contradiction that $u_n \not\to u $ in $\mathcal D^{\alpha,2}(\mathbb{R}^N)$. Thereby, 
$\displaystyle\int_{\R^N}  |(-\Delta)^{\alpha/2} v_n|^{2} \, dx \to L>0$ for some subsequence.Then, by Lemma \ref{nuSD},
\begin{equation}\label{eq:DS}
D \geq 2^{- 2\alpha/N} S.
\end{equation}
On the other hand, from Lemma \ref{blowbound}  
$$\frac {\alpha}{N} \left(\frac{N-2\alpha}{2N} \right)^{(N-2\alpha)/2\alpha} (2D)^{N/2\alpha} \leq b,$$
from which, using \eqref{eq:DS}, it follows that  
$$ 
\frac {\alpha}{N} \left(\frac{N-2\alpha}{2N} \right)^{(N-2\alpha)/2\alpha} 2^{(N-2\alpha )/2\alpha} S^{N/2\alpha} \leq b.
$$ 
This contradicts  Lemma \ref{lambdabbound} and finishes the proof. 
\end{proof}

\medskip


\subsection{Proof of Theorem \ref{groundN3}}\label{subsec:th1}

\noindent At this point we wish to show that $D$ is attained by $u$, where $u$ is the weak limit of $\{ u_n \}$. First of all, we know that
\begin{equation}\label{TDN}
T(u) = \frac 12\int_{\R^N}  |(-\Delta)^{\alpha/2} u|^2  \, dx \leq \liminf_{n \rightarrow + \infty} \frac 12 \int_{\R^N}  
|(-\Delta)^{\alpha/2} u_n|^2  \, dx =D
\end{equation}
so we just need to prove that $u\in \mathcal M$.

By \cite[Lemma 1]{Am}, there is $R>0$ such that 
$$
\frac{1}{2}u_n^{2}-F(u_n) \geq 0 \quad \forall n \in \mathbb{N} \quad \mbox{in } \mathbb R^{N} \setminus  B_{R},
$$
$B_{R}$ being the ball of radius $R$ centered in $0.$
Since
$$
\int_{B_R}F(u_n)\,dx=\frac{1}{2}\int_{B_R}u_n^{2}\,dx+\int_{\mathbb{R}^N \setminus B_R}\left(\frac{1}{2}u_n^{2}-F(u_n)\right)\,dx +1
$$
and  $u_n \to u $ in $L^{2^{*}_{\alpha}}(B_R)$, the above information together with the Fatous' Lemma gives
$$
\displaystyle\int_{B_R}F(u)\, dx \geq 
\frac{1}{2}\int_{B_R}u^{2}\,dx+\int_{\mathbb{R}^N \setminus B_R}\left(\frac{1}{2}u^{2}-F(u)\right)\,dx +1
$$
which leads to
$$ 
\displaystyle\int_{\R^N}  G(u) \, dx  \geq 1.
$$
Suppose by contradiction that
$$ 
\int_{\R^N}  G(u) \, dx  > 1
$$
and define $h: [0, 1] \rightarrow \R$ by $h(t) = \int_{\R^N}  G(tu) \, dx$. The growth conditions on $f$  ensure that $h(t) < 0$ for $t$ close to $0$ and $h(1) = \int_{\R^N}  G(u) \, dx > 1$. 
 Then, by the continuity of $h$, there exists $t_0 \in (0, 1)$ such that $h(t_0)=1$. Then, 
$$\int_{\R^N}  G(t_0 u) \, dx = 1      \Longleftrightarrow t_0 u \in \mathcal{M}.$$
Consequently, by \eqref{TDN}
$$D \leq T(t_0 u) = \frac{t_0^2}{2} \int_{\R^N} |(-\Delta)^{\alpha/2} u|^2 \, dx = t_0^2 \, T(u) \leq t_0^2 \, D < D$$
which is absurd. Thus $ \int_{\R^N}  G(u) \, dx  =1 $, i.e. $ u\in \mathcal M$.
The fact that the solution $u$ of the minimizing problem
 gives rise to a ground state solution, follows by standard arguments; indeed, 
since $ u$ is a solution of the minimizing problem \eqref{eq:D}, i.e. $D =T(u)= \inf_{w\in \mathcal M}T(w)$, then there exists an  associated Lagrange multiplier $\lambda$ such that, in a weak sense,
$$
(-\Delta)^{\alpha}  u =  \lambda g( u).
$$
Now by  testing the previous equation on the same minimizer $u$, we deduce that
$$2T(u)=\lambda\int_{\mathbb R^{N}}g(u)u dx = \lambda J'(u)[u] \geq 2\lambda $$
so that it has to be, by Lemma \ref{Dposit}, $T(u)\geq\lambda>0$.
Setting $u_{\sigma}(x):=u(\sigma x)$ for $\sigma>0$, we easily see that
$$
(-\Delta)^{\alpha} u_{\sigma} = \lambda \sigma^{2\alpha}g(u_{\sigma}).
$$
Choosing $\sigma=\lambda^{1/2\alpha}$ we obtain a solution of \eqref{eq:P}.
 Arguing as in  \cite[Theorem 3]{BL}, $u_\sigma$ is a ground state solution.

%

\section{The case $N=1$ and $\alpha= 1/2$}\label{tools1}

\subsection{The variational framework}
As for the previous case, let us consider the set of nontrivial solutions of
\eqref{eq:P}, namely
$$
\Sigma = \{  u \in H^{1/2}(\R) \setminus \{0 \}: I'(u) =0 \}, 
$$  
and let 
$$  m = \inf_{u \in \Sigma} I(u).$$ 

Denoting with   $G(u) =  F(u)  -\displaystyle\frac{u^2}{2}$, the primitive of
$g(u) = f(u)  -u$,  we introduce  the set 
\begin{equation}\label{eq:MN=1}
\mathcal{M}=  \left\{ u \in H^{1/2}(\R) \setminus \{0 \}: \int_{\R} G(u) \, dx =0 \right \} ,
\end{equation} 
and 
\begin{equation}\label{DefD}
T(u) =\frac 12 \int_{\R}  |(-\Delta)^{1/4}u|^2  dx,\qquad D =  \inf_{u\in \mathcal M}  T(u). 
\end{equation}
it is, again as before,
$$ 2 D = \inf_{u \in \mathcal{M}} \Big\{  \int_{\R}  |(-\Delta)^{1/4}u|^2  dx\Big\}. 
$$ 
We point out here that, since
 we will deal with minimizing sequences $\{u_{n}\}$
 for the minimization problem \eqref{DefD}, as in the previous Section we suppose that  $u_{n}$ is non-negative and radially symmetric. Moreover, we again define the min-max level associated to the functional $I$
\begin{equation}\label{eq:bN=1}
b = \inf_{\gamma \in \Gamma} \max_{t \in [0, 1]} I(\gamma(t))
\end{equation}
where 
$$ 
\Gamma = \{\gamma \in C\left([0, 1], H^{\alpha}(\R) \right): \gamma(0) =0 \ \hbox{ and } \ I(\gamma(1)) < 0\}.
$$



\subsection{Some preliminary stuff}

Let us start with the following important result due to T.~Ozawa \cite{Ozawa} 

\begin{theorem} \label{th:Ozawa}
	There exists $0 < \omega \leq \pi$ such that, for all $r \in (0, \omega)$, there exists $H_{r}>0$ satisfying
	\begin{equation*}\label{I1}
	\int_{\mathbb{R}}( e^{r u^2 }-1) \,d x \leq H_{r}|u|^{2}_{2},
	\end{equation*}
	for all $u\in H^{1/2}(\mathbb{R})$ with $|(-\Delta)^{1/4} u|^2_{2}\leq 1$.
\end{theorem}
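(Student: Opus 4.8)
The plan is to reduce the exponential inequality to a one–parameter family of Gagliardo–Nirenberg estimates with explicit control of the constant, via a Taylor expansion of the exponential. Concretely, for $u\in H^{1/2}(\mathbb R)$ one writes
$$
\int_{\mathbb R}\big(e^{ru^2}-1\big)\,dx=\sum_{k=1}^{\infty}\frac{r^{k}}{k!}\int_{\mathbb R}u^{2k}\,dx
=\sum_{k=1}^{\infty}\frac{r^{k}}{k!}\,|u|_{2k}^{2k},
$$
the interchange of sum and integral being justified by monotone convergence. Thus the whole matter is to estimate $|u|_{2k}^{2k}$ well enough, uniformly in $k$.

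The key ingredient, which is Ozawa's lemma, is the estimate that there is a constant $C>0$ (independent of $p$) such that
$$
|u|_{p}^{p}\le C^{p}\,p^{p/2}\,|(-\Delta)^{1/4}u|_2^{\,p-2}\,|u|_2^{2}\qquad\text{for all }p\in[2,\infty)\text{ and all }u\in H^{1/2}(\mathbb R).
$$
I would prove this by Fourier analysis: decompose $u$ into Littlewood–Paley pieces $u=\sum_j\Delta_j u$ localised to frequencies $|\xi|\sim 2^{j}$, use Bernstein's inequality $|\Delta_j u|_\infty\lesssim 2^{j/2}|\Delta_j u|_2$ together with $2^{j/2}|\Delta_j u|_2\lesssim|(-\Delta)^{1/4}\Delta_j u|_2$, and interpolate $|\Delta_j u|_p\le |\Delta_j u|_2^{2/p}|\Delta_j u|_\infty^{1-2/p}$; summing the resulting geometric-type series over a dyadic range whose length is tuned to $\log p$ produces exactly the factor $p^{1/2}$ in the constant. (Alternatively one can argue directly by splitting $\int|\widehat u|$ at a frequency $R=R(p)$ via Cauchy–Schwarz against the two weights $1$ and $|\xi|$, optimising in $R$.) This step — getting the constant to grow no faster than $C^{p}p^{p/2}$ — is the main obstacle; everything else is bookkeeping.

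Granting the lemma, restrict to $u$ with $|(-\Delta)^{1/4}u|_2^2\le 1$, so that $|(-\Delta)^{1/4}u|_2^{\,2k-2}\le 1$ for every $k\ge 1$ and hence $|u|_{2k}^{2k}\le C^{2k}(2k)^{k}|u|_2^{2}$. Plugging this in and using Stirling's bound $k!\ge (k/e)^{k}$, which gives $(2k)^{k}/k!\le(2e)^{k}$, we obtain
$$
\int_{\mathbb R}\big(e^{ru^2}-1\big)\,dx\le |u|_2^{2}\sum_{k=1}^{\infty}\frac{r^{k}C^{2k}(2k)^{k}}{k!}
\le |u|_2^{2}\sum_{k=1}^{\infty}\big(2eC^{2}r\big)^{k}.
$$
The geometric series converges precisely when $2eC^{2}r<1$, so setting $\omega:=\min\{\pi,(2eC^{2})^{-1}\}$ and, for $r\in(0,\omega)$, $H_r:=\dfrac{2eC^{2}r}{1-2eC^{2}r}$ yields $\int_{\mathbb R}(e^{ru^2}-1)\,dx\le H_r\,|u|_2^{2}$, which is the claim. (One can in fact push $\omega$ up to the sharp value $\pi$ by a more careful analysis of the constant, but the statement only requires existence of some admissible $\omega\le\pi$.)
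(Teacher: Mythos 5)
You should be aware that the paper does not prove this statement at all: Theorem \ref{th:Ozawa} is quoted as a known result of Ozawa \cite{Ozawa} and used as a black box in Section \ref{tools1}. Your proposal is, in outline, a correct reconstruction of Ozawa's original argument: expand $e^{ru^2}-1$ in a power series, apply a Gagliardo--Nirenberg inequality of the form $|u|_p\le C\,p^{1/2}\,|(-\Delta)^{1/4}u|_2^{1-2/p}|u|_2^{2/p}$ with $C$ independent of $p$ (this is precisely Ozawa's main inequality), and sum using Stirling's bound; the bookkeeping you give, including the choice $\omega=\min\{\pi,(2eC^2)^{-1}\}$ and $H_r=\frac{2eC^2r}{1-2eC^2r}$, is correct. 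The only imprecise point is your sketch of the key lemma: summing the Littlewood--Paley blocks term by term with the crude bounds $\|\Delta_j u\|_2\le\min\{|u|_2,\,2^{-j/2}|(-\Delta)^{1/4}u|_2\}$ only yields a factor $p$, not $p^{1/2}$; to get $p^{1/2}$ one should instead apply Cauchy--Schwarz in the dyadic index $j$ (the factor arises from $\bigl(\sum_{j>J}2^{-2j/p}\bigr)^{1/2}\sim p^{1/2}2^{-J/p}$ after optimizing the splitting frequency $2^{J/2}\sim |(-\Delta)^{1/4}u|_2/|u|_2$), so the effective range is of length comparable to $p$ rather than $\log p$. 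With that correction the lemma, and hence the whole argument, goes through, and it is essentially the proof in the cited source rather than an alternative route.
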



At this point we establish some preliminary  results which will be useful in order to prove Theorem \ref{groundN1}.

\begin{lemma}\label{Mmanif1}
The set $\mathcal{M}$ defined in \eqref{eq:MN=1} is  not empty and  a $C^1$ manifold. 
\end{lemma}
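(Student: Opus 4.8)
The goal is to show that $\mathcal M = \{u\in H^{1/2}(\R)\setminus\{0\}: \int_\R G(u)\,dx = 0\}$ is nonempty and a $C^1$ manifold, in close analogy with Lemma \ref{Mmanif}. I will proceed in two steps: first exhibit an element of $\mathcal M$, then verify that $0$ is a regular value of the constraint functional $J(u) := \int_\R G(u)\,dx$ restricted to $H^{1/2}(\R)\setminus\{0\}$.

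\emph{Nonemptiness.} Fix $0\not\equiv\varphi\in C_0^\infty(\R)$ with $\varphi\geq 0$, and study the scaling $t\mapsto \varphi_t(x):=\varphi(x/t)$ (here the relevant scaling is dilation in the spatial variable, since in dimension $N=1$, $\alpha=1/2$ the constraint is of zero-mass type and one cannot simply rescale the amplitude as in the $N\geq 2$ case). Under $x\mapsto x/t$ in dimension one, $\int_\R G(\varphi_t)\,dx = t\int_\R G(\varphi)\,dx$, so the sign of $\int_\R G(\varphi_t)\,dx$ is the sign of $\int_\R G(\varphi)\,dx$ and does not change — this is the obstruction, and it means I must instead choose $\varphi$ so that $\int_\R G(\varphi)\,dx$ can be driven to $0$ by amplitude scaling. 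Concretely, set $h(t)=\int_\R G(t\varphi)\,dx$; by \eqref{f_{1}}, $G(s)\sim -s^2/2$ near $0$, so $h(t)<0$ for $t$ small, while \eqref{f_{4}} (with $q>2$) gives $F(s)\geq \frac{\tau}{q}s^q$ for $s\geq 0$, hence $h(t)\geq \frac{\tau}{q}t^q\int_\R\varphi^q\,dx - \frac{t^2}{2}\int_\R\varphi^2\,dx \to +\infty$ as $t\to+\infty$. By the intermediate value theorem (continuity of $h$ follows from \eqref{f_{1}}, \eqref{f_{5}}, and the Ozawa inequality of Theorem \ref{th:Ozawa}) there is $\bar t>0$ with $h(\bar t)=0$, so $\bar t\varphi\in\mathcal M$.

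\emph{Manifold structure.} It suffices to show $J'(u)\neq 0$ for every $u\in\mathcal M$, for then $\mathcal M = J^{-1}(0)$ is a $C^1$ codimension-one submanifold of the open set $H^{1/2}(\R)\setminus\{0\}$ by the implicit function theorem; here $J\in C^1$ under \eqref{f_{1}}, \eqref{f_{5}}, Theorem \ref{th:Ozawa} exactly as in the verification that $I\in C^1$. As in \eqref{eq:positividade}, I compute for $u\in\mathcal M$
$$
J'(u)[u]=\int_\R (f(u)u - u^2)\,dx = \int_\R (f(u)u - 2F(u))\,dx + 2\int_\R G(u)\,dx = \int_\R (f(u)u-2F(u))\,dx \geq 0
$$
by \eqref{f_{3}}; however, unlike the $N\geq2$ case where the right-hand side was $\geq 2$, here it is only $\geq 0$, so I cannot immediately conclude $J'(u)\neq 0$ this way. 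Instead I argue directly: if $J'(u)=0$ then in particular $\int_\R(f(u)u-u^2)\,dx=0$, combined with $\int_\R G(u)\,dx=0$ this forces $\int_\R(f(u)u-2F(u))\,dx=0$; more importantly, $J'(u)=0$ means $f(u)=u$ pointwise a.e. on $\{u>0\}$ (testing against suitable test functions), and then $G(u)=F(u)-u^2/2$ with $F(u)=\int_0^u f = \int_0^u t\,dt = u^2/2$ on $\{u>0\}$, giving $G(u)\equiv 0$ — but near $s=0^+$, \eqref{f_{1}} gives $f(s)/s\to 0\neq 1$, contradicting $f(u)=u$ on the (nonempty, positive-measure) set where $u$ takes small positive values, unless $u\equiv 0$, contradicting $u\in\mathcal M$. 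The main obstacle is precisely this last point: in the $N=1$ zero-mass regime the Lagrange-multiplier lower bound $J'(u)[u]\geq 2$ fails, so the regularity of the value $0$ must be recovered by the pointwise argument just sketched rather than by the clean inequality used in Lemma \ref{Mmanif}.
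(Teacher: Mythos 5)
Your nonemptiness argument (amplitude scaling $h(t)=\int_\R G(t\varphi)\,dx$, negative for small $t>0$ by \eqref{f_{1}}, tending to $+\infty$ by \eqref{f_{4}} since $q>2$, then the intermediate value theorem) is exactly the paper's argument and is fine. The problem is in the manifold half. You correctly note that the inequality $J'(u)[u]\geq 2$ from the $N\geq2$ case degenerates to $\geq 0$ when the constraint is $\int_\R G(u)\,dx=0$, so something else is needed; but your replacement has two defects. A minor one: from $J'(u)=0$ you get $f(u(x))=u(x)$ for a.e.\ $x$, i.e.\ equality of $f$ with the identity at the values attained by $u$; this does not give $F(u(x))=u(x)^2/2$, because $F(u(x))=\int_0^{u(x)}f(t)\,dt$ involves $f$ on the whole interval $[0,u(x)]$, not only at the endpoint (this side claim is not needed for your final contradiction, but as written it is false). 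The genuine gap is the last step: your contradiction rests on the assertion that the set where $u$ takes small positive values is nonempty with positive measure. For a general measurable (even $L^2$) function this is simply false --- a multiple of a characteristic function takes no small positive values --- so the claim must use the $H^{1/2}$ regularity of $u$, and that is precisely the point that needs proof. What your argument actually yields is: $\{u<0\}$ is null (there $g(u)=-u>0$), and by \eqref{f_{1}} there is $\delta>0$ with $f(s)<s$ on $(0,\delta)$, hence $u\in\{0\}\cup[\delta,+\infty)$ a.e. To conclude you must rule out such ``two--level'' functions in $H^{1/2}(\R)$. This is true: with $A=\{u\geq\delta\}$ one has $0<|A|<\infty$ and the Gagliardo seminorm is bounded below by $\delta^2\int_A\int_{A^c}|x-y|^{-2}\,dx\,dy$, which diverges whenever $|A|>0$ and $|A^c|>0$ (the borderline case $sp=N$, the same reason characteristic functions are never in $H^{1/2}(\R)$); but this is a nontrivial fact which you neither state nor prove, and it is exactly where the difficulty of the zero--mass constraint sits.

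For comparison, the paper argues pointwise rather than by contradiction: for $w\in\mathcal M$ it uses \eqref{f_{1}} together with the decay $w(x)\to0$ as $|x|\to\infty$ to find a point $x_0$ with $g(w(x_0))<0$, hence by continuity an interval on which $g(w)<0$, and then tests $J'(w)$ against a nonnegative bump $\phi\in C_0^\infty(\R)$ supported there to get $J'(w)[\phi]<0$. Your scheme can be repaired either by proving (or quoting) the non-jump property of $H^{1/2}(\R)$ sketched above, or by switching to the paper's pointwise construction; as submitted, the step ``$u$ takes small positive values on a set of positive measure'' is an unjustified assertion, not a proof.
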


\begin{proof}
Consider $w \in C^{\infty}_{0}(\R)$ with $w(x)>0$ and define a function
$$
h(t)
=\displaystyle\int_{\R}G(tw)dx=\displaystyle\int_{\R}F(tw) dx-\frac {t^{2}}{2} \displaystyle\int_{\R}w^{2}dx.
$$
From \eqref{f_{1}} for $t>0$ small we have
$$
h(t)\leq \frac{\varepsilon -1}{2}t^{2}\displaystyle\int_{\R}w^{2}dx.
$$
For $\varepsilon <1$, we get $h(t)<0$ for $t>0$ small.

Now using \eqref{f_{4}} we obtain
$$
h(t) \geq \tau \frac{t^{q}}{q}\displaystyle\int_{\R}w^{q}dx-\frac {t^{2}}{2} \displaystyle\int_{\R}w^{2}dx
\quad {and }\quad h'(t) \geq \lambda t^{q-1}\displaystyle\int_{\R}w^{q}dx- t \displaystyle\int_{\R}w^{2}dx.
$$
Then, $h(t)>0$ for $t>0$ large
and $h'(t)>0$ for $t>0$ large. Then there is a  $\overline{t}>0$ such that
$$
\displaystyle\int_{\R}G(\overline{t}w)\,dx
=h(\overline{t})=0.
$$

Now we prove that $\mathcal{M}$ is a manifold. Indeed, if  $w\in \mathcal{M}$, then  $w \not=0$. Then, from \eqref{f_{1}} and  the fact that
$\lim_{|x|\to \infty}w(x)=0 $, there exists $ x_0 \in \R$ such that $g(w(x_0))<0$. Thereby, by continuity,  there is an open interval $B_\delta(x_0)$ such that 
$$
g(w(x))<0, \quad  \forall x \in B_\delta(x_0).
$$
As a consequence we can always find a $\phi \in C_0^{\infty}(\R) \subset H^{1/2}(\R)$ such that $J'(w)[\phi]=\int_{\R}g(w)\phi \,dx < 0$, showing that  $J'(w) \not=0$.	
\end{proof}

\begin{lemma}\label{ConvergenceofF}
Assume that $f$ satisfies \eqref{f_{1}},\eqref{f_{4}} and  \eqref{f_{5}}. Let $\{v_n\}\subset H^{1}(\mathbb R)$ be a sequence of radial functions such that 
$$
v_n \rightharpoonup v \ \ \mbox{in} \ \ H^{1/2}(\R)
$$
and
$$
\displaystyle\sup_{n}|(-\Delta)^{1/4} u|^{2}_{2}=\rho <1 \ \ \mbox{and} \ \ \displaystyle\sup_{n}|v_n|^{2}_{2}=M <\infty.
$$
Then,
$$
\displaystyle\int_{\R}F(v_n)dx\longrightarrow \displaystyle\int_{\R}F(v)dx.
$$
\end{lemma}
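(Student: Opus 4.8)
The plan is to split $\int_{\mathbb R}F(v_n)\,dx$ into the contribution of a large ball $B_R$ and of its complement, controlling the first by a.e.\ convergence together with uniform integrability and the second by decay at infinity. Two preliminaries are needed. First, from \eqref{f_{1}}, \eqref{f_{5}} and the continuity of $f$, for every $\varepsilon>0$ and every $\beta>\beta_0$ there is $C_{\varepsilon,\beta}>0$ with
$$|F(s)|\le \frac{\varepsilon}{2}\,s^2+C_{\varepsilon,\beta}\big(e^{\beta s^2}-1\big),\qquad s\in\mathbb R,$$
where near $s=0$ the quadratic term dominates by \eqref{f_{1}}, for $|s|$ large one uses \eqref{f_{5}} with some $\beta_0<\beta'<\beta$, integrates, and absorbs $|s|e^{\beta' s^2}$ into $e^{\beta s^2}-1$, and on a compact interval $f$ is bounded. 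Second, since $\{v_n\}$ is bounded in $H^{1/2}(\mathbb R)$ and radial, the radial estimate of \cite[Lemma 1]{Am} gives $|v_n(x)|\le C|x|^{-1/2}$ uniformly in $n$, and the same for the (radial) weak limit $v$, which satisfies $|v|_2^2\le M$; hence for $R$ large, $|v_n(x)|$ and $|v(x)|$ are below the threshold of \eqref{f_{1}} on $\{|x|>R\}$, so $|F(v_n)|\le \tfrac\varepsilon2 v_n^2$ there and therefore $\int_{\{|x|>R\}}|F(v_n)|\,dx\le \tfrac\varepsilon2 M$ uniformly in $n$ (and likewise for $v$).

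The heart of the proof, and the step I expect to be the main obstacle, is the uniform integrability of $\{F(v_n)\}\cup\{F(v)\}$ on bounded sets; this is exactly where the hypotheses $\rho<1$ and $\beta_0\le\omega$ are used, since they permit the choice of $\beta>\beta_0$ and $p>1$ with $r:=p\beta\rho\in(0,\omega)$. Applying Theorem \ref{th:Ozawa} to $w_n:=v_n/\sqrt{\rho}$, which satisfies $|(-\Delta)^{1/4}w_n|_2^2\le1$, gives
$$\int_{\mathbb R}\big(e^{p\beta v_n^2}-1\big)\,dx=\int_{\mathbb R}\big(e^{r w_n^2}-1\big)\,dx\le H_r|w_n|_2^2=\frac{H_r}{\rho}\,|v_n|_2^2\le \frac{H_r M}{\rho}=:C_0$$
uniformly in $n$, and the same bound holds for $v$. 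Then, for any measurable set $A$, using the growth estimate above, H\"older's inequality and the elementary bound $(e^t-1)^p\le e^{pt}-1$ for $t\ge0$,
$$\int_A|F(v_n)|\,dx\le \frac{\varepsilon}{2}M+C_{\varepsilon,\beta}\,|A|^{1/p'}\Big(\int_{\mathbb R}\big(e^{p\beta v_n^2}-1\big)\,dx\Big)^{1/p}\le \frac{\varepsilon}{2}M+C_{\varepsilon,\beta}C_0^{1/p}\,|A|^{1/p'},$$
with $p'=p/(p-1)$; choosing first $\varepsilon$ and then $|A|$ small makes the right-hand side as small as one wishes, uniformly in $n$.

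To finish, by the compact embedding $H^{1/2}_{\mathrm{rad}}(\mathbb R)\hookrightarrow L^q(\mathbb R)$ for $q\in(2,\infty)$ (Lions \cite{Lions}) we have $v_n\to v$ in $L^3(\mathbb R)$, hence along a subsequence $v_n\to v$ a.e.\ in $\mathbb R$, and by continuity $F(v_n)\to F(v)$ a.e. On $B_R$, Vitali's convergence theorem together with the uniform integrability above yields $\int_{B_R}F(v_n)\,dx\to\int_{B_R}F(v)\,dx$; combining with the tail estimate gives $\limsup_n\big|\int_{\mathbb R}F(v_n)\,dx-\int_{\mathbb R}F(v)\,dx\big|\le \varepsilon M$ along that subsequence, and letting $\varepsilon\to0$ shows it converges to $\int_{\mathbb R}F(v)\,dx$. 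Since every subsequence of $\{v_n\}$ admits a further subsequence with this property, the whole sequence $\big\{\int_{\mathbb R}F(v_n)\,dx\big\}$ converges to $\int_{\mathbb R}F(v)\,dx$, which is the claim.
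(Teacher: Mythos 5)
Your proof is correct, and it reaches the conclusion by a more self-contained route than the paper. The paper's own argument verifies the hypotheses of the Strauss compactness lemma \cite[Theorem A.I]{BL} with the comparison function $Q(s)=e^{ts^2}-1$, $t>\beta_0$: the key uniform bound $\sup_n\int_{\R}Q(v_n)\,dx<\infty$ comes from Theorem \ref{th:Ozawa} applied to the rescaled functions $v_n/(1-\epsilon)$ (this is exactly where $\rho<1$ and $\beta_0\le\omega$ enter), and the $L^1$ convergence $F(v_n)\to F(v)$ is then quoted from that lemma. You instead prove the $L^1$ convergence by hand: the pointwise bound $|F(s)|\le \tfrac{\varepsilon}{2}s^2+C_{\varepsilon,\beta}\,(e^{\beta s^2}-1)$, H\"older's inequality together with $(e^t-1)^p\le e^{pt}-1$ and Ozawa's inequality for $w_n=v_n/\sqrt{\rho}$ give equi-integrability on sets of finite measure, the uniform radial decay plus \eqref{f_{1}} controls the tails, and Vitali's theorem with a subsequence argument concludes. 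The essential mechanism is the same in both proofs (Ozawa's inequality made applicable by $\rho<1$, uniform decay at infinity, a.e.\ convergence), so nothing is gained in generality, but your argument avoids citing the Strauss lemma, and your parameter choice $r=p\beta\rho\in(0,\omega)$ stays strictly inside the admissible range of Theorem \ref{th:Ozawa}, whereas the paper's computation formally uses the endpoint exponent $\omega$ itself. One point to make explicit in your write-up: the uniform decay $|v_n(x)|\le C|x|^{-1/2}$ from \cite[Lemma 1]{Am} is legitimate here because, as arranged earlier in the paper, the sequences to which the lemma is applied can be taken radially symmetric, non-negative and non-increasing; for merely radial functions in $H^{1/2}(\R)$ this uniform decay is not automatic, and the paper's proof relies on the same convention when it asserts $v_n(x)\to0$ as $|x|\to\infty$ uniformly in $n$.
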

\begin{proof}
Without loss of generality, we can assume that there is $v \in H^{1/2}(\R)$, radial, such that 
$$
v_n \rightharpoonup v \ \ \mbox{in} \ \ H^{1/2}(\R), \ \ v_n(x) \to v(x) \ \ \mbox{a.e in} \ \ \R \ \ \mbox{and} \ \ \displaystyle\lim_{|x|\to +\infty} v_n(x)=0, \ \ \mbox{uniformly in} \ \ n.
$$
Using the Theorem \ref{th:Ozawa}, we know that for each $m\in(0,1)$ and $M>0$, there exists $C(m,M)>0$ such that
$$
\displaystyle\sup_{u\in \texttt{B}}\int_{\R}(e^{w u^2}-1) \, d x  \leq C(m,M),
$$
where 
$$
\texttt{B}=\biggl\{ u \in  H^{1/2}(\R) : \ |(-\Delta)^{1/4} u|^{2}_{2}\leq m \ \ \mbox{and} \ \ |u|^{2}_{2}\leq M \biggl\}.
$$
Now, choose $\varepsilon>0$ small enough such that $m=\frac{\rho}{(1-\epsilon)^{2}} \in (0,1)$ and set $t=\frac{\omega}{(1-\epsilon)^{2}} > w \geq \beta_0$. Then,
$$
\displaystyle\int_{\R}(e^{t v_{n}^2}-1) dx=\displaystyle\int_{\R}(e^{t(1-\epsilon)^{2} (\frac{v_{n}}{1-\epsilon})^2}-1)dx 
=  \displaystyle\int_{\R}(e^{\omega (\frac{v_{n}}{1-\epsilon})^2}-1)dx.
$$
Since $v_{n}\in \texttt{B}$ we have
$$
\displaystyle\int_{\R}(e^{t v_{n}^2}-1)dx \leq \displaystyle\sup_{u\in \texttt{B}}\int_{\R}(e^{\omega u^2}-1) \,d x  \leq C(m,M).
$$
Now, setting $P(s)=F(s)$ and $Q(s)=e^{ts^{2}}-1$, from \eqref{f_{1}}, \eqref{f_{4}} and the last inequality, we get
$$
\displaystyle\lim_{s\to 0}\frac{P(s)}{Q(s)}=\displaystyle\lim_{s\to+ \infty}\frac{P(s)}{Q(s)}=0,
$$
$$
\displaystyle\sup_{n\to+ \infty}\int_{\R}Q(v_n) dx<\infty
$$
and
$$
P(v_{n}(x))\longrightarrow P(v(x)) \ \ \mbox{a.e in} \ \ \R.
$$
Consequently the hypotheses of the Compactness Lemma of Strauss \cite[Theorem A.I]{BL} are fulfilled. Hence $P(v_{n})$ converges to 
$P(v)$ in $L^{1}(\R)$, and then 
$$
\displaystyle\int_{\R}F(v_n)dx\longrightarrow \displaystyle\int_{\R}F(v)dx
$$
concluding the proof.
\end{proof}

The relation between the ground state level and the minimax level defined in 
\eqref{eq:bN=1} is given in the following

\begin{lemma}\label{Dandb}
The numbers $D$ and $b$ satisfy the inequality $D\leq b$.
\end{lemma}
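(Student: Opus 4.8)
The plan is to exploit the identity $I(u)=T(u)-\int_{\R}G(u)\,dx$, which follows at once from $\|u\|^{2}=|(-\Delta)^{1/4}u|_{2}^{2}+|u|_{2}^{2}$ and $G(u)=F(u)-u^{2}/2$. In particular, on the manifold $\mathcal M$ of \eqref{eq:MN=1} one has $I(u)=T(u)$, so that $\inf_{\mathcal M}I=D$. Thus it is enough to show that $\max_{t\in[0,1]}I(\gamma(t))\geq D$ for every $\gamma\in\Gamma$, and for this I will prove that the range of $\gamma$ meets $\mathcal M$. Observe first that, since $I(\gamma(1))<0$ and $T\geq0$, one has $\int_{\R}G(\gamma(1))\,dx=T(\gamma(1))-I(\gamma(1))>0$, while $\int_{\R}G(\gamma(0))\,dx=\int_{\R}G(0)\,dx=0$.

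Next I would establish the following elementary fact, closely related to the verification that $I$ has a mountain-pass geometry: there is $\rho_{0}>0$ such that $\int_{\R}G(u)\,dx<0$ whenever $0<\|u\|\leq\rho_{0}$. Combining \eqref{f_{1}} (which controls $f$ near $0$) with \eqref{f_{5}} (which controls $f$ at infinity by $e^{\beta s^{2}}$, $\beta>\beta_{0}$), for any $\varepsilon>0$, $q>2$ and $\beta>0$ one gets a constant $C_{\varepsilon}$ with $|F(s)|\leq\frac{\varepsilon}{2}s^{2}+C_{\varepsilon}|s|^{q}(e^{\beta s^{2}}-1)$ for all $s$. Integrating over $\R$, applying Hölder's inequality, controlling the exponential factor by Theorem \ref{th:Ozawa} — after normalising $u$ by $|(-\Delta)^{1/4}u|_{2}$ to fit its hypothesis, which is admissible once $\|u\|$ is small so that $\beta|(-\Delta)^{1/4}u|_{2}^{2}<\omega$ — and estimating the polynomial factor by a Gagliardo--Nirenberg interpolation, one arrives at a bound of the form $\int_{\R}F(u)\,dx\leq\big(\tfrac{\varepsilon}{2}+C\,|(-\Delta)^{1/4}u|_{2}^{\,q-2}\big)|u|_{2}^{2}$. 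Choosing $\varepsilon<1$ and then $\rho_{0}$ small yields $\int_{\R}F(u)\,dx<\tfrac12|u|_{2}^{2}$, i.e. $\int_{\R}G(u)\,dx<0$, on $0<\|u\|\leq\rho_{0}$.

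Then I would run the crossing argument. Fix $\gamma\in\Gamma$ and put $\psi(t)=\int_{\R}G(\gamma(t))\,dx$, which is continuous on $[0,1]$ with $\psi(0)=0$ and $\psi(1)>0$. By continuity of $\gamma$ and $\gamma(0)=0$, $\|\gamma(t)\|\leq\rho_{0}$ for $t$ near $0$, hence $\psi(t)\leq0$ there. Set $t_{0}=\inf\{t\in[0,1]:\psi(t)>0\}$; since this set is nonempty, relatively open, and disjoint from a neighbourhood of $0$, we get $t_{0}\in(0,1)$ and, by continuity, $\psi(t_{0})=0$. Moreover $\gamma(t_{0})\neq0$: if $\gamma(t_{0})=0$ then $\|\gamma(t)\|\leq\rho_{0}$ on a whole neighbourhood of $t_{0}$, so $\psi\leq0$ there, contradicting $t_{0}=\inf\{\psi>0\}$. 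Hence $\gamma(t_{0})\in\mathcal M$ and
\[
\max_{t\in[0,1]}I(\gamma(t))\;\geq\;I(\gamma(t_{0}))\;=\;T(\gamma(t_{0}))-\psi(t_{0})\;=\;T(\gamma(t_{0}))\;\geq\;D .
\]
Taking the infimum over $\gamma\in\Gamma$ gives $b\geq D$, as claimed.

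The only point that is not purely topological is the estimate of the second paragraph, and that is exactly where the exponential critical growth \eqref{f_{5}} forces the use of Ozawa's inequality (Theorem \ref{th:Ozawa}) together with the smallness of $|(-\Delta)^{1/4}\gamma(t)|_{2}$ for $t$ close to $0$; I expect this to be the main (though routine) obstacle, everything else being soft.
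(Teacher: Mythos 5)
Your argument is correct, but it takes a genuinely different route from the paper's. The paper works along rays: given $v$ with $v^{+}\neq 0$ it projects $v^{+}$ onto $\mathcal M$ as in Lemma \ref{Mmanif1}, getting $t_{0}v^{+}\in\mathcal M$ and $D\leq T(t_{0}v^{+})=I(t_{0}v^{+})\leq\max_{t\geq 0}I(tv^{+})$, treats $v^{+}=0$ separately, and reads off $D\leq b$ from this ray estimate. You instead show that every admissible path crosses $\mathcal M$: $\int_{\R}G(\gamma(1))\,dx>0$ because $I(\gamma(1))<0$ and $T\geq0$; $\int_{\R}G(u)\,dx<0$ for $u\neq0$ of small norm, which you obtain from the pointwise bound coming from \eqref{f_{1}} and \eqref{f_{5}}, Cauchy--Schwarz, Theorem \ref{th:Ozawa} applied after normalising by $|(-\Delta)^{1/4}u|_{2}$, and the fractional Gagliardo--Nirenberg inequality $|u|_{2q}^{q}\leq C|(-\Delta)^{1/4}u|_{2}^{q-1}|u|_{2}$; a first-crossing argument then yields $t_{0}$ with $\gamma(t_{0})\in\mathcal M$, whence $\max_{t}I(\gamma(t))\geq I(\gamma(t_{0}))=T(\gamma(t_{0}))\geq D$. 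This is precisely the one-dimensional analogue of Lemma \ref{blowbound}: for $N=1$, $\alpha=1/2$ the Pohozaev manifold reduces to $\{\int_{\R}G(u)\,dx=0\}=\mathcal M$, and you give a self-contained proof of the crossing property that for $N\geq2$ the paper quotes from \cite{LilianeRquelSquassina}. What your approach buys is a lower bound on $\max_{t\in[0,1]}I(\gamma(t))$ for an \emph{arbitrary} path, which is exactly what $b\geq D$ demands, whereas the paper's computation controls $\max_{t\geq0}I(tv)$, a quantity that a priori dominates $I$ only along straight paths; the price is the extra small-norm estimate for $\int_{\R}G(u)\,dx$, and there your choice of the interpolation inequality (which keeps the factor $|u|_{2}^{2}$, so that the nonlinear term can be absorbed into $\tfrac12|u|_{2}^{2}$) is the right one — a plain embedding bound of the form $C\|u\|^{q}$ would not close the argument.
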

\begin{proof}
Arguing as in Lemma \ref{Mmanif1}, given $v\in H^{1/2}(\R)$ with $v^{+}=\max\{v,0\}\neq 0$, there is $t_{0}>0$ such that $t_0 v^{+} \in \cal{M}$. Then, 
$$
D\leq \frac{t_{0}^{2}}{ 2} \int_{\R}  |(-\Delta)^{1/4}  v^{+}|^2  \, dx = I(t_{0}v^{+})\leq \displaystyle\max_{t\geq 0}I(tv^{+}).
$$
On the other hand, since $f(s)=0$ for $s\leq 0$, if $v\in H^{1/2}(\R)$, $v\neq 0$ with $v^{+}=0$, \linebreak then $\max_{t\geq 0}I(tv)=\infty$. Hence in any case $D\leq b$.
\end{proof}

\begin{lemma}\label{Dposit}
The number $D$ given by \eqref{DefD} is positive, namely, $ D > 0$.
\end{lemma}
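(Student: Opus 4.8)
The plan is to argue by contradiction, and the crucial point — which is exactly what makes the case $N=1,\ \alpha=1/2$ different from the case $N\ge2$ — is that here the constraint reads $\int_{\R}G(u)\,dx=0$, i.e. $\int_{\R}F(u)\,dx=\tfrac12|u|_2^2$, so it carries no ``free constant'' to play with (contrast the $N\ge2$ case, where $\int G(u)=1$). Hence a crude estimate only reproduces the trivial $|u|_2^2\lesssim|u|_2^2$, and one must instead keep a strictly positive power of the homogeneous norm $|(-\Delta)^{1/4}u|_2$, which is precisely what a Gagliardo--Nirenberg inequality provides. So, suppose $D=0$ and take a minimizing sequence $\{u_n\}\subset\mathcal M$ with $d_n:=|(-\Delta)^{1/4}u_n|_2^2=2T(u_n)\to0$; since $u_n\neq0$ we have $|u_n|_2>0$ and $d_n>0$, and from $u_n\in\mathcal M$ we record the identity $\int_{\R}F(u_n)\,dx=\tfrac12|u_n|_2^2$.

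First I would record the elementary estimate following from \eqref{f_{1}} and \eqref{f_{5}}: fixing any $\beta>\beta_0$, for every $\varepsilon>0$ there is $C_\varepsilon>0$ with $|F(s)|\le\tfrac{\varepsilon}{2}s^2+C_\varepsilon s^2(e^{\beta s^2}-1)$ for all $s\in\R$ (near $0$ the first term suffices by \eqref{f_{1}}, since $F(s)=o(s^2)$; for large $s$ the exponential term dominates by \eqref{f_{5}}, and on compact sets one adjusts $C_\varepsilon$). Taking $\varepsilon=\tfrac12$ and using the identity above, one gets $\tfrac14|u_n|_2^2\le C_{1/2}\int_{\R}u_n^2(e^{\beta u_n^2}-1)\,dx$ for every $n$.

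Next I would estimate the right-hand side. By Hölder with exponents $2,2$ and the elementary inequality $(e^a-1)^2\le e^{2a}-1$ for $a\ge0$, one has $\int_{\R}u_n^2(e^{\beta u_n^2}-1)\,dx\le|u_n|_4^2\big(\int_{\R}(e^{2\beta u_n^2}-1)\,dx\big)^{1/2}$. For the exponential factor, set $\tilde u_n:=u_n/d_n^{1/2}$, so that $|(-\Delta)^{1/4}\tilde u_n|_2^2=1$; fixing $\bar r\in(0,\omega)$, since $d_n\to0$ we have $2\beta d_n<\bar r$ for $n$ large, and then the term-by-term inequality $e^{ct^2}-1\le\tfrac{c}{\bar r}(e^{\bar r t^2}-1)$ (valid for $0\le c\le\bar r$) together with Theorem~\ref{th:Ozawa} gives $\int_{\R}(e^{2\beta u_n^2}-1)\,dx=\int_{\R}(e^{2\beta d_n\tilde u_n^2}-1)\,dx\le\tfrac{2\beta d_n}{\bar r}H_{\bar r}|\tilde u_n|_2^2=\tfrac{2\beta H_{\bar r}}{\bar r}|u_n|_2^2$. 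For the factor $|u_n|_4^2$ I would use the scale-invariant Gagliardo--Nirenberg inequality in $\dot H^{1/2}(\R)$ (scale invariant precisely because $2\alpha=N$), namely $|u_n|_4\le C\,|u_n|_2^{1/2}d_n^{1/4}$, so that $|u_n|_4^2\le C^2|u_n|_2\,d_n^{1/2}$.

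Combining the three displays yields $\tfrac14|u_n|_2^2\le C'|u_n|_2^2\,d_n^{1/2}$ for $n$ large, with $C'$ independent of $n$; dividing by $|u_n|_2^2>0$ gives $\tfrac14\le C'd_n^{1/2}$, contradicting $d_n\to0$. Hence $D>0$. The one step that needs care — and the whole reason the argument differs from the $N\ge2$ one — is not to discard the factor $d_n^{1/2}$: bounding $|u_n|_4$ by $\|u_n\|$ would destroy it, whereas the homogeneous Gagliardo--Nirenberg inequality preserves a positive power of $d_n$, and that surviving power, played against the non-vanishing of $|u_n|_2$, is exactly what forces $d_n$ (equivalently $T(u_n)$) away from zero.
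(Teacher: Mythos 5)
Your proof is correct, but it follows a genuinely different route from the paper's. The paper argues by contradiction using the scale invariance of $|(-\Delta)^{1/4}\cdot|_2$ when $2\alpha=N=1$: it rescales $v_n(x)=u_n(x/\mu_n)$ so that $|v_n|_2=1$ while $\int_\R G(v_n)\,dx=0$ and the seminorm is unchanged (hence still tends to $0$), then invokes Lemma \ref{ConvergenceofF} (Strauss' compactness lemma combined with Theorem \ref{th:Ozawa}, which is why the minimizing sequence is taken radial and non-negative) to pass to the limit in $\int_\R F(v_n)\,dx=\tfrac12$, forcing the weak limit $v$ to be nontrivial while weak lower semicontinuity forces $|(-\Delta)^{1/4}v|_2=0$, a contradiction. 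You instead give a purely quantitative argument with no weak limits and no compactness: the growth bound $F(s)\le\tfrac{\varepsilon}{2}s^2+C_\varepsilon s^2(e^{\beta s^2}-1)$ from \eqref{f_{1}} and \eqref{f_{5}}, Cauchy--Schwarz with $(e^a-1)^2\le e^{2a}-1$, Theorem \ref{th:Ozawa} applied to the amplitude-normalized functions $u_n/d_n^{1/2}$ (legitimate for $n$ large via the power-series comparison $e^{ct^2}-1\le \tfrac{c}{\bar r}(e^{\bar r t^2}-1)$ for $c\le\bar r$), and the fractional Gagliardo--Nirenberg inequality $|u|_4\le C|u|_2^{1/2}|(-\Delta)^{1/4}u|_2^{1/2}$, which indeed preserves the crucial factor $d_n^{1/2}$. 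What your approach buys: it dispenses with the radial symmetry and decay of the minimizing sequence and with Lemma \ref{ConvergenceofF}, and it actually yields an explicit lower bound for $T(u)$ over $u\in\mathcal M$ with small seminorm, hence for $D$. What the paper's approach buys: it only uses tools already set up for the main compactness step (Ozawa plus Strauss), whereas you import one extra standard ingredient, the $L^4$ interpolation inequality in $\dot H^{1/2}(\R)$, which is not stated in the paper (it follows from $\dot H^{1/4}(\R)\hookrightarrow L^4(\R)$ and Plancherel interpolation, so this is not a gap, but it should be cited or proved). Two small points you implicitly use and should make explicit: $d_n>0$ for every $n$ (if $d_n=0$ then $u_n$ is constant, hence $u_n=0$ in $L^2(\R)$, contradicting $u_n\in\mathcal M$), and the intermediate-range adjustment of $C_\varepsilon$ in the bound on $F$, which uses the continuity of $f$ on compact sets.
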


\begin{proof}
By definition $D\geq 0$. Assume by contradiction that $D=0$ an let $\{u_n\}$ be a 
(non-negative and radial) minimizing
sequence in $H^{1/2}(\R)$ for $T$, that is, 
$$
\displaystyle\int_{\R} |(-\Delta)^{1/4}u_n|^2 dx \to 0 \ \ \mbox{and} \ \ \displaystyle\int_{\R} G(u_n) \, dx =0. 
$$
For each $\mu_n>0$, the function  $v_n(x):=u_n( x / \mu_n)$ satisfies
$$
\displaystyle\int_{\R} |(-\Delta)^{1/4}v_n|^2dx= 
\displaystyle\int_{\R} |(-\Delta)^{1/4}u_n|^2dx \ \ \mbox{and} \ \ \displaystyle\int_{\R} G(v_n) \, dx =0.
$$
Since 
$$
\displaystyle\int_{\R} v_n^{2}dx= \mu_{n}^{2}\displaystyle\int_{\R} u_n^{2}dx,
$$
we choose $\mu_{n}^{2}=|u_{n}|_{2}^{-2}
$
 to obtain 
$$
\displaystyle\int_{\R} |(-\Delta)^{1/4}v_n|^2 dx\to 0, \ \ \displaystyle\int_{\R} v_n^{2}dx=1 \ \ \mbox{and} \ \ \displaystyle\int_{\R} G(v_n) \, dx =0.
$$
 and we can assume that
there exists $ v \in H^{1/2}(\R)$, radial, such that  $v_n \rightharpoonup v$ in $H^{1/2}(\R)$. From Lemma \ref{ConvergenceofF} we get
$$
\displaystyle\int_{\R}F(v_n)dx\longrightarrow \displaystyle\int_{\R}F(v)dx.
$$
Note that $\int_{\R} G(v_n) \, dx =0$ implies $\int_{\R} F(v_n) \, dx =\frac 12$ and $\int_{\R} F(v) \, dx =\frac 12$. Then $v \neq 0$. But
$$
\displaystyle\int_{\R} |(-\Delta)^{1/4}v|^2dx \leq \liminf_{n\to+\infty} \displaystyle\int_{\R} |(-\Delta)^{1/4}v_n|^2dx 
\longrightarrow 0,
$$
implies $v=0$ which is an absurd.
\end{proof}

\begin{lemma}\label{lambdabbound1}
We have
$ b <   1/2.$ 
 \end{lemma}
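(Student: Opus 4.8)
The plan is to mimic the argument of Lemma~\ref{lambdabbound}, but now exploiting the one-dimensional constant $\tau^{*}$ from \eqref{f_{4}} and the definition \eqref{eq:bN=1} of $b$ in the case $N=1$, $\alpha=1/2$. First I would fix a test function $\varphi\in H^{1/2}(\R)$ normalized so that $\|\varphi\|=1$ and $|\varphi|_q^2=C_q^{-1}$; such a $\varphi$ exists by scaling an arbitrary nonzero element of $H^{1/2}(\R)$. Using that $\gamma(t)=t\varphi$ (suitably reparametrized on $[0,1]$, after noting $I(t\varphi)\to-\infty$ as $t\to+\infty$ because of \eqref{f_{4}} with $q>2$) is an admissible path in $\Gamma$, we get
\[
b\leq \max_{t\geq 0} I(t\varphi).
\]

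Next I would estimate $I(t\varphi)$ from above. Since $f(s)=0$ for $s\le 0$ and, by \eqref{f_{4}}, $F(s)\geq \tau s^{q}/q$ for $s\geq 0$, we have
\[
I(t\varphi)=\frac{t^2}{2}\|\varphi\|^2-\int_{\R}F(t\varphi)\,dx
\leq \frac{t^2}{2}-\tau\frac{t^q}{q}\int_{\R}|\varphi|^q\,dx
=\frac{t^2}{2}-\tau\frac{t^q}{q}\,C_q^{-q/2}.
\]
A one-variable maximization of $\psi(t)=\tfrac{t^2}{2}-a t^q$ with $a=\tau C_q^{-q/2}/q>0$ gives $\max_{t\ge0}\psi(t)=\frac{q-2}{2q}\,(a q)^{-2/(q-2)}=\frac{q-2}{2q}\,\dfrac{C_q^{q/(q-2)}}{\tau^{2/(q-2)}}$. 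Hence
\[
b\leq \frac{q-2}{2q}\,\frac{C_q^{q/(q-2)}}{\tau^{2/(q-2)}}.
\]

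Finally I would plug in the hypothesis $\tau>\tau^{*}=\left(\frac{q-2}{q}\right)^{(q-2)/2}C_q^{q/2}$ from the case $N=1$ of \eqref{f_{4}}. Raising to the power $2/(q-2)$ gives $\tau^{2/(q-2)}>\frac{q-2}{q}\,C_q^{q/(q-2)}$, so
\[
b\leq \frac{q-2}{2q}\,\frac{C_q^{q/(q-2)}}{\tau^{2/(q-2)}}<\frac{q-2}{2q}\cdot\frac{q}{q-2}\cdot\frac{C_q^{q/(q-2)}}{C_q^{q/(q-2)}}=\frac12,
\]
which is the claimed strict inequality. I do not expect a genuine obstacle here: the only points requiring a line of care are the verification that $t\varphi$ (reparametrized) lies in $\Gamma$ — which follows from the Mountain Pass geometry already noted after \eqref{eq:bN=1}, together with $I(t\varphi)\to-\infty$ — and the bookkeeping of exponents when substituting $\tau^{*}$. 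The computation is otherwise the exact one-dimensional analogue of Lemma~\ref{lambdabbound}.
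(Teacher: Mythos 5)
Your proposal is correct and is essentially the paper's own proof: the paper simply says to repeat the argument of Lemma \ref{lambdabbound} with the one-dimensional constant $\tau^{*}=\left(\frac{q-2}{q}\right)^{(q-2)/2}C_q^{q/2}$, which is exactly the test-function estimate, maximization, and substitution you carry out.
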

\begin{proof}
It is sufficient to repeat the same argument of the Lemma \ref{lambdabbound}, 
recalling that now by \eqref{f_{4}} it is 
$$
\tau^* = \left( \frac{q-2}{q} \right)^{(q-2)/2} C_q^{q/2}
$$
concluding the proof.
\end{proof}

\subsection{Proof of Theorem \ref{groundN1}}\label{sectmainres1}

At this point we will show that $D$ is attained by $u$, where $u$ is the weak limit of $\{ u_n \}$. Indeed, since $u_n \rightharpoonup u$ in $H^{1/2}(\R)$ we have 
\begin{equation}\label{TD}
T(u) = \frac 12\int_{\R}  |(-\Delta)^{1/4} u|^2  \, dx \leq \liminf_{n \rightarrow + \infty} \frac 12 \int_{\R}  
|(-\Delta)^{1/4} u_n|^2  \, dx =D.
\end{equation}
Moreover, by Lemma \ref{ConvergenceofF} we have 
$$
\displaystyle\int_{\mathbb{R}}F(u) dx= \displaystyle\liminf_{n\to\infty}\displaystyle\int_{\mathbb{R}}F(u_n) dx\geq \frac 12 \displaystyle\int_{\mathbb{R}}u^{2} dx
$$
leading to  
$$ 
\displaystyle\int_{\R}  G(u) \, dx  \geq 0. 
$$
As in the previous case $N\geq2$, we just need to prove that $u\in \mathcal M$, i.e. $ \int_{\R^N}  G(u) \, dx  = 0$.

We again argue  by contradiction by supposing that 
$$ 
\int_{\R}  G(u) \, dx  > 0.
$$
As in the previous section, we set $h: [0, 1] \rightarrow \R$ by $h(t) = \int_{\R}  G(tu) \, dx$. Using the growth condition of $f$ we have $h(t) < 0$ for $t$ close to $0$ and $h(1) = \int_{\R}  G(u) \, dx > 0$. Then, by the continuity of $h$, there exists $t_0 \in (0, 1)$ such that $h(t_0)=0$, that is $t_{0}u\in \mathcal M$.
Consequently, by \eqref{TD}
$$D \leq T(t_0 u) = \frac{t_0^2}{2} \int_{\R^N} |(-\Delta)^{\alpha/2} u|^2 \, dx = t_0^2 \, T(u) \leq t_0^2 \, D < D$$
which is absurd. 

As for the case $N\geq2$, one show that the minimizer $u$ of \eqref{DefD} gives rise to a ground state solution of 
\eqref{eq:P}.

%

\end{document}